\DeclareMathOperator{\sign}{sign}
\DeclareMathOperator{\Ker}{Ker}
\DeclareMathOperator{\Tr}{Tr}
\DeclareMathOperator{\supp}{supp}
\DeclareMathOperator{\meas}{meas}
\renewcommand{\Re}{\operatorname{Re}}
\newcommand{\abs}[1]{\lvert#1\rvert}
\newcommand{\Abs}[1]{\left\lvert#1\right\rvert}
\newcommand{\norm}[1]{\lVert#1\rVert}
\newcommand{\Norm}[1]{\left\lVert#1\right\rVert}
\newcommand{\jap}[1]{\langle#1\rangle}
\newcommand{\bbR}{{\mathbb R}}
\newcommand{\bbC}{{\mathbb C}}
\newcommand{\bbN}{{\mathbb N}}
\newcommand{\wh}{\widehat}
\newcommand{\calK}{{\mathcal K}}
\newcommand{\bS}{\mathbf{S}}
\numberwithin{equation}{section}
\theoremstyle{plain}
\newtheorem{theorem}{\bf Theorem}[section]
\newtheorem*{theorem*}{Theorem 1.1$'$}
\newtheorem{lemma}[theorem]{\bf Lemma}
\theoremstyle{definition}
\theoremstyle{remark}
\newtheorem*{remark*}{\bf Remark}
\newtheorem{example}[theorem]{\bf Example}
\newcommand{\wt}{\widetilde}
\newcommand{\eps}{\varepsilon}
\renewcommand{\[}{\begin{equation}}
\renewcommand{\]}{\end{equation}}
\begin{document}

\title[Spectral density of Hardy kernels]{The spectral density of Hardy kernel matrices}

\author{Alexander Pushnitski}
\address{Department of Mathematics, King’s College London, Strand, London WC2R~2LS, United Kingdom} 
\email{alexander.pushnitski@kcl.ac.uk} 

\date{23 March 2021}

\begin{abstract}
We consider  infinite matrices obtained by restricting Hardy integral kernels to natural numbers. 
For a suitable class of Hardy kernels we describe the absolutely continuous spectrum, the essential spectrum and the asymptotic spectral density of these matrices. 
\end{abstract}

\maketitle

\section{Introduction}\label{sec.a}
Let $k=k(x,y)$ be a complex valued function of two variables $x>0$ and $y>0$, which satisfies two conditions:
\begin{itemize}
\item
$k$ is Hermitian: $k(x,y)=\overline{k(y,x)}$; 
\item
$k$ is homogeneous of degree $-1$: $k(a x,a y)=k(x,y)/a$.
\end{itemize}
We will call $k(x,y)$ a \emph{Hardy kernel}; see remarks below about the terminology. 

The purpose of this paper is to consider some spectral properties of the infinite matrices 
\[
K=\{k(n,m)\}_{n,m=1}^\infty \quad \text{ in $\ell^2(\bbN)$}
\label{eq:3}
\]
as well as their finite truncations
$$
K_N=\{k(n,m)\}_{n,m=1}^N \quad \text{ in $\bbC^N$}
$$
as $N\to\infty$. 
Our point of view is the comparison between the Hardy kernel matrices $K$ and $K_N$ and their ``continuous analogues'', i.e. integral operators $T$ in  $L^2(1,\infty)$, 
$$
Tf(x)=\int_1^\infty k(x,y)f(y)dy, \quad f\in L^2(1,\infty)
$$
as well as their truncations $T_N$ in $L^2(1,N)$, 
$$
T_Nf(x)=\int_1^N k(x,y)f(y)dy, \quad f\in L^2(1,N).
$$
In fact, $T$ and $T_N$ are Wiener-Hopf operators in disguise. Indeed, an exponential change of variable $x=e^u$, $y=e^v$, $u,v\in (0,\infty)$, effects a unitary transformation which transforms $T$ (resp. $T_N$) into the integral operator on $L^2(0,\infty)$ (resp. $L^2(0,\log N)$) with the Wiener-Hopf kernel $k(e^{u-v},1)e^{(u-v)/2}$.

Obviously, such exponential change of variable is not available on integers, 
and therefore the spectral analysis of the Hardy kernel matrices $K$ and $K_N$ presents considerable challenges. 
However, we will show that under suitable restrictions on the kernels:
\begin{itemize}
\item
the essential spectra of $K$ and $T$ coincide;
\item
the absolutely continuous (a.c.) parts of $K$ and $T$ are unitarily equivalent;
\item
the asymptotic spectral density of $K_N$ coincides with that of $T_N$ and is therefore given by the First Szeg\H{o} Limit Theorem;
\item
$K_N$ may have some eigenvalues (in contrast to $T_N$).  
\end{itemize}

To enable meaningful analysis, we need to impose some constraints on the kernels $k$. First observe that the homogeneity of $k$ is equivalent to the representation 
$$
k(x,y)=\frac1{\sqrt{xy}}\wt k(x/y)
$$
with some function $\wt k$ on $(0,\infty)$. In what follows, we shall assume that 
$$
k(x,y)=\frac1{\sqrt{xy}}\frac1{2\pi} \int_{-\infty}^\infty \varphi(t)(x/y)^{-it}dt=\frac1{\sqrt{xy}}\wh \varphi(\log \tfrac{x}{y}), 
$$
where $\varphi\in L^1(\bbR)$ is real-valued and 
$$
\wh\varphi(u)=\frac1{2\pi}\int_{-\infty}^\infty \varphi(t)e^{-itu}dt
$$
is the Fourier transform of $\varphi$.
We shall call $\varphi$ the \emph{symbol} in this context. 
With this notation, after the exponential change of variable $T$ becomes the integral operator with the kernel $\widehat\varphi(u-v)$, in agreement with the standard notion of a symbol of a Wiener-Hopf integral operator. 
We shall henceforth indicate explicitly the dependence on the symbol and denote the above operators by $K(\varphi)$, $K_N(\varphi)$, $T(\varphi)$ and $T_N(\varphi)$.

We finish this section with some remarks on the terminology and the history of the problem; see Section~\ref{sec.exa} for further discussion of related literature.

Hardy kernels are often associated with integral operators on $L^2(0,\infty)$ rather than on $L^2(1,\infty)$. By the same exponential change of variables, such integral operators are unitarily equivalent to the operators of convolution with $\wh\varphi$ on $L^2(\bbR)$. Thus, by the Fourier transform, they are unitarily equivalent to the operators of multiplication by the symbol $\varphi$ in $L^2(\bbR)$, and so the spectral theory of this class of operators is extremely simple.

The most famous Hardy kernel is 
$$
k(x,y)=\frac1{x+y}\ ,
$$
whose study, both in discrete and continuous versions, goes back to Hilbert and Schur. More generally, sufficient conditions for boundedness of Hardy kernel matrices, as well as (sometimes sharp) operator norm bounds for them on $\ell^p$ spaces are discussed in detail in Chapter 9 of the classical monograph \cite{HLP} by Hardy, Littlewood and Polya. They mostly argue by comparing matrices $K$ with the corresponding integral operators on $L^2(0,\infty)$ and impose the condition 
\[
\int_0^\infty \frac{\abs{k(x,1)}}{\sqrt{x}}dx<\infty
\label{eq:14}
\]
together with some monotonicity conditions.

The term ``Hardy kernel'' is usually applied to real-valued kernels which are symmetric, homogeneous of degree $-1$ and satisfy \eqref{eq:14}.
We will not need \eqref{eq:14}, but instead we will impose some conditions on the symbol $\varphi$. 

\section{Main results}\label{sec.b}
We start with some general remarks. 
Let us write the quadratic form of $K_N(\varphi)$ on a vector $a=\{a_n\}_{n=1}^N\in \bbC^N$ as follows:
\begin{align}
\jap{K_N(\varphi)a,a}_{\bbC^N}
&=
\sum_{n,m=1}^N k(n,m)a_n\overline{a}_m
=
\frac1{2\pi}\sum_{n,m=1}^N a_n\overline{a}_m \int_{-\infty}^\infty\varphi(t) n^{-\frac12-it}m^{-\frac12+it}dt
\notag
\\
&=\frac1{2\pi}\int_{-\infty}^\infty\varphi(t) \Abs{\sum_{n=1}^N a_nn^{-\frac12-it}}^2dt\ .
\label{eq:6}
\end{align}
This formula suggests the following:
\begin{itemize}
\item
$K_N(\varphi)$ depends monotonically on $\varphi$ in the quadratic form sense; in particular, if $\varphi\geq0$, then $K_N(\varphi)$ is positive semi-definite. 
\item
The study of $K_N(\varphi)$ is related to the theory of Dirichlet series. 
We will touch upon this aspect of the problem only briefly in Lemma~\ref{lma.8}. 
\item
It is easy to see that $\varphi\in L^1(\bbR)$ is a necessary condition for the definition of $K_N(\varphi)$ to make sense. 
\end{itemize}
As a warm-up, let us compute the asymptotics of the trace of $K_N(\varphi)$. 
We have
$$
k(n,n)=\frac1{n}\widehat\varphi(0)=\frac1{n}\frac1{2\pi}\int_{-\infty}^\infty\varphi(t)dt,
$$
and therefore 
$$
\Tr K_N(\varphi)
=
\frac1{2\pi}\int_{-\infty}^\infty \varphi(t)dt \sum_{n=1}^N \frac1n
=
\frac1{2\pi}\int_{-\infty}^\infty \varphi(t)dt \ \bigl(\log N+O(1)\bigr), \quad N\to\infty.
$$
Now, considering the case $\varphi\geq0$, it follows that for any $\eps>0$ we have 
$$
\#\{j: \lambda_j(K_N(\varphi))>\eps\}
\leq \Tr(K_N(\varphi)/\eps)=O(\log N), \quad N\to\infty,
$$
where $\{\lambda_j(K_N(\varphi))\}_{j=1}^N$ are the eigenvalues of $K_N(\varphi)$ and $\#$ is the number of elements in a given set. So we see that ``most'' of the $N$ eigenvalues of $K_N(\varphi)$ are located near zero and only $O(\log N)$ eigenvalues are located above $\eps>0$. Our first result concerns this logarithmically small proportion of the eigenvalues of $K_N(\varphi)$ and gives their asymptotic density.

\begin{theorem}\label{thm2}
Let $\varphi\in L^1(\bbR)$ be a real-valued symbol, and let $g$ be a Lipschitz continuous function on $\bbR$ with $g(0)=0$. As above, set 
$$
K_N(\varphi)=\{k(n,m)\}_{n,m=1}^N, \quad k(n,m)=\frac1{\sqrt{nm}}\widehat\varphi(\log\tfrac{n}{m}). 
$$
Then 
\[
\lim_{N\to\infty}(\log N)^{-1}\Tr g(K_N(\varphi))=\frac1{2\pi}\int_{-\infty}^\infty g(\varphi(t))dt\ .
\label{eq:1}
\]
\end{theorem}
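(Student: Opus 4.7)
The plan is to reduce to the First Szeg\H{o} Limit Theorem for the Wiener--Hopf operator $T_N(\varphi)$, which via the exponential change of variable $x=e^u$ is unitarily equivalent to the convolution operator on $L^2(0,\log N)$ with kernel $\widehat\varphi(u-v)$. The classical theorem furnishes $\lim_{N\to\infty}(\log N)^{-1}\Tr g(T_N(\varphi))=\frac{1}{2\pi}\int g(\varphi(t))\,dt$, so the main task is to show $\Tr g(K_N(\varphi))-\Tr g(T_N(\varphi))=o(\log N)$.

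First I would perform a density reduction in $\varphi$. Formula~\eqref{eq:6} shows that $\psi\ge 0$ implies $K_N(\psi)\ge 0$, so splitting a real symbol $\psi=\psi_+-\psi_-$ yields $\|K_N(\psi)\|_1\le\tfrac{H_N}{2\pi}\|\psi\|_1$ with $H_N\sim\log N$. Together with the Lipschitz--trace inequality $|\Tr g(A)-\Tr g(B)|\le\|g\|_{\mathrm{Lip}}\|A-B\|_1$ for self-adjoint $A,B$, this lets me replace $\varphi$ by an approximant $\varphi_\eps\in C_c^\infty(\bbR)$ at a cost $O(\|\varphi-\varphi_\eps\|_1)$ uniformly in $N$ after dividing by $\log N$. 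For $\varphi\in C_c^\infty$, $\widehat\varphi$ is Schwartz, so condition~\eqref{eq:14} holds and the Hardy--Littlewood--Polya comparison with the continuous operator $T(\varphi)$ furnishes a uniform bound $\|K_N(\varphi)\|\le C(\varphi)$, placing the spectrum in a fixed interval $[-C,C]$.

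For the moment case $g(x)=x^k$, both $\Tr K_N(\varphi)^k$ and $\Tr T_N(\varphi)^k$ admit representations as $k$-fold integrals of $\prod_j\varphi(t_j)$ against products of $k$ factors: $D_N(1+i\sigma)=\sum_{n=1}^N n^{-1-i\sigma}$ in the discrete case, and the Wiener--Hopf factor $\overline{F_L(\sigma)}=(1-e^{-iL\sigma})/(i\sigma)$ in the continuous one, with $L=\log N$. Euler--Maclaurin summation yields the pointwise identity $D_N(1+i\sigma)=\overline{F_L(\sigma)}+O(1+|\sigma|)$, which on the bounded $\sigma$-range dictated by $\mathrm{supp}\,\varphi$ is $O(1)$. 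Expanding $\prod D_N-\prod\overline{F_L}$ into $k$ telescoping cross terms and using $|\overline{F_L(\sigma)}|\le\min(L,2/|\sigma|)$, each term is bounded by a product of $\int_{-a}^a\min(L,1/|\sigma|)\,d\sigma=O(\log L)$ factors, giving $O((\log L)^{k-1})=o(L)$; the classical Szeg\H{o} theorem for $T_N$ then delivers the moment limit $\Tr K_N^k/\log N\to\tfrac{1}{2\pi}\int\varphi^k$.

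The main obstacle is extending the moment convergence to Lipschitz $g$. A naive Weierstrass approximation on $[-C,C]$ by polynomials $p$ with $p(0)=0$ gives uniform error $\eps$, but multiplied by the rank $N$ this yields $\eps N\gg\log N$; one must instead exploit the factorization $g(x)=xh(x)$ coming from $g(0)=0$ (say by first mollifying $g$ to $C^1$ and then polynomially approximating $h$), together with the tightness estimate $\#\{|\lambda_j|>s\}\lesssim(\log N)/s$ (from $\Tr|K_N|=O(\log N)$), to keep the error at $O(\eps\log N)$. Secondarily, the cross-term analysis is itself delicate because $D_N(1+i\sigma)$ is not square integrable in $\sigma$, ruling out soft trace-class comparison of $K_N$ with $T_N$, so one must rely on the pointwise Euler--Maclaurin estimate and the concentration of $F_L$ near the diagonal.
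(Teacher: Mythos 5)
Your overall strategy matches the paper's: compare $K_N(\varphi)$ with the Wiener--Hopf operator $T_N(\varphi)$, invoke the First Szeg\H{o} Limit Theorem for $T_N$, first for moments and for a dense class of symbols, then pass to Lipschitz $g$ and $\varphi\in L^1$. The density reduction in $\varphi$ via the bound $\norm{K_N(\psi)}_{\bS_1}\le\frac{H_N}{2\pi}\norm{\psi}_{L^1}$ combined with the Mirsky--Lidskii trace inequality $\abs{\Tr g(A)-\Tr g(B)}\le\norm{g}_{\mathrm{Lip}}\norm{A-B}_{\bS_1}$ is a clean variant of what the paper does. Your moment comparison via Euler--Maclaurin on $\zeta_N(1+i\sigma)-\eta_N(1+i\sigma)$ is more hands-on than the paper's route (which factorises $K_N$ and $T_N$ through the operators $A_N,B_N$ and proves a single uniform trace-norm bound $\norm{A_N^*A_N-B_N^*B_N}_{\bS_1}=O(1)$, giving $\Tr K_N^m-\Tr T_N^m=O(1)$ rather than your $O((\log\log N)^{m-1})$); both are $o(\log N)$, so both suffice. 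In the telescoping cross-term estimate you should note that one of the $k$ variables $\sigma_j=t_j-t_{j+1}$ is dependent ($\sum\sigma_j=0$), so the ``$(k-1)$ factors of $\int\min(L,1/\abs{\sigma})d\sigma$'' count requires either the cyclic relabelling trick (place the $O(1)$ factor on $\sigma_k$) or an appropriate order of integration; it does come out to $O((\log L)^{k-1})$, but not for free.

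The genuine gap is in your final step, the passage from polynomial to Lipschitz $g$. Your worry that ``a naive Weierstrass approximation gives error $\eps N\gg\log N$'' is based on a one-sided bound $\abs{\Tr g(K_N)-\Tr p(K_N)}\le\norm{g-p}_\infty N$, but this is not what one should use. The correct move (and the paper's) is a \emph{two-sided} sandwich: build polynomials $p_\pm$ with $p_\pm(0)=0$, $p_-\le g\le p_+$ on $[-\Lambda,\Lambda]$ and $p_+-p_-\le\eps$ there. Since $\spec K_N(\varphi)\subset[-\Lambda,\Lambda]$, one gets $\Tr p_-(K_N)\le\Tr g(K_N)\le\Tr p_+(K_N)$ with \emph{no} factor of $N$; passing to $\limsup/\liminf$ and using the moment result for $p_\pm$ together with the compact support of $\varphi$ squeezes the limit to within $O(R\eps)$. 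The fix you propose instead — mollify $g$ to $C^1$, factor $g_\delta(x)=x\,h_\delta(x)$, approximate $h_\delta$, and control the mollification error via $\#\{\abs{\lambda_j}>s\}\lesssim(\log N)/s$ — does not close. Mollification gives $\norm{g-g_\delta}_\infty\le C\delta\norm{g}_{\mathrm{Lip}}$, but since $g$ is merely Lipschitz, $g-g_\delta$ does \emph{not} have small Lipschitz constant, so you cannot write $\abs{g(\lambda)-g_\delta(\lambda)}\le\eps\abs{\lambda}$. What you do have is $\abs{g(\lambda)-g_\delta(\lambda)}\le C\min(\delta,\abs{\lambda})$, and summing this over the spectrum gives $\sum_j\min(\delta,\abs{\lambda_j})$, which is comparable to $\Tr\abs{K_N}=O(\log N)$ with no small constant; the tightness estimate only controls $\#\{\abs{\lambda_j}>s\}$ and does not make the near-zero part small unless you already know the limiting spectral density, which is circular. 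So this step needs to be replaced by the two-sided polynomial sandwich.
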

Since by our assumptions
$$
\abs{g(\varphi(t))}\leq C\abs{\varphi(t)}, 
$$
the integral in the right hand side of \eqref{eq:1} converges absolutely.

Formula \eqref{eq:1} with $T_N(\varphi)$ in place of $K_N(\varphi)$ is well known (after the exponential change of variable reducing $T_N(\varphi)$ to a Wiener-Hopf operator), see e.g.  \cite[Section~8.6]{GS}. It is more commonly used for Toeplitz matrices and in that context it is known as the First Szeg\H{o} Limit Theorem, see e.g. \cite[Section 5.4]{BS}.

As it is standard in this circle of questions, one can replace a Lipschitz function $g$ in \eqref{eq:1} by the characteristic function of an interval $(\lambda,\infty)$, $\lambda>0$, as long as the set $\{t\in\bbR: \varphi(t)=\lambda\}$ has zero Lebesgue measure. This leads to a more expressive formula
$$
\lim_{N\to\infty}(\log N)^{-1}\#\{j: \lambda_j(K_N(\varphi))>\lambda)\}
=
\frac1{2\pi}\meas\{t: \varphi(t)>\lambda\}\ ,
$$
where $\meas$ is the Lebesgue measure on $\bbR$.

Our second result characterises the essential and the absolutely continuous spectra of $K(\varphi)$. 
Here for simplicity we restrict ourselves to bounded positive semi-definite operators. 

\begin{theorem}\label{thm1}
Let $\varphi\in L^\infty(\bbR)$ be a non-negative function satisfying  
\[
\int_{-\infty}^\infty \varphi(t)(\log(2+\abs{t}))^\delta dt<\infty
\label{eq:2}
\]
for some $\delta>2$; assume that $\varphi$ is not identically equal to zero. 
Then $K(\varphi)$, defined by the matrix \eqref{eq:3}, is a bounded positive semi-definite operator on $\ell^2(\bbN)$.
The kernel of $K(\varphi)$ is trivial. The essential spectrum of $K(\varphi)$ coincides with the essential spectrum of $T(\varphi)$, and the a.c. part of $K(\varphi)$ is unitarily equivalent to $T(\varphi)$. 
\end{theorem}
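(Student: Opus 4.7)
The plan is to compare $K(\varphi)$ on $\ell^2(\bbN)$ with $T(\varphi)$ on $L^2(1,\infty)$ via the isometric embedding $J\colon\ell^2(\bbN)\to L^2(1,\infty)$ defined by $(Ja)(x)=a_n$ for $x\in[n,n+1)$, and to show that the intertwining defect
\[
R := T(\varphi)J - J K(\varphi)\colon\ell^2(\bbN)\to L^2(1,\infty)
\]
belongs to the trace class $\calS_1$. The spectral conclusions will then follow from the two-Hilbert-space version of the Birman--Krein / Kato--Rosenblum theorem, complemented by a Fredholm-theoretic argument for the essential spectrum.

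The preliminary claims are relatively short. Positive semi-definiteness of $K(\varphi)$ is immediate from \eqref{eq:6} since $\varphi\geq 0$. Boundedness will follow \emph{a posteriori} from the trace class bound on $R$: since $J^*R = J^*T(\varphi)J - K(\varphi)$ and $J^*T(\varphi)J$ is bounded (because $T(\varphi)$ is Wiener-Hopf with $L^\infty$ symbol), $K(\varphi)$ is bounded. For triviality of the kernel, if $K(\varphi)a=0$ then by \eqref{eq:6} the Dirichlet series $F(t)=\sum_n a_n n^{-1/2-it}$ vanishes almost everywhere on the positive-measure set $\{\varphi>0\}$; since $F$ is the boundary trace of the function $\sum_n a_n n^{-s}$, which is holomorphic on $\Re s>1/2$ because $a\in\ell^2$, classical uniqueness theorems for Dirichlet series force $a=0$.

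The core of the proof is the trace class estimate. For $x\in[n,n+1)$ a direct computation gives
\[
(Ra)(x)=\sum_m a_m\Bigl(\int_m^{m+1}\!k(x,y)\,dy - k(n,m)\Bigr).
\]
Proving $R\in\calS_1$ is the \emph{main obstacle}. I expect to decompose $R$ into a near-diagonal piece (where $|\log(n/m)|$ is bounded) and an off-diagonal piece, and bound the trace norm of each via smoothness and decay properties of the kernel $(x,y)\mapsto(xy)^{-1/2}\widehat\varphi(\log(x/y))$. The regularity of $\widehat\varphi$ is controlled by moments of $\varphi$, and the hypothesis \eqref{eq:2} with exponent $\delta>2$ is precisely what is needed to make the resulting trace-norm bounds summable over $n,m$. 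Once $R\in\calS_1$ is in hand (and noting $J^*J=I$), the two-Hilbert-space Birman--Krein theorem provides wave operators $W_\pm(T(\varphi),K(\varphi);J)$ that are complete on the a.c.\ subspaces, yielding the claimed unitary equivalence of a.c.\ parts. For the equality of essential spectra, I would combine the a.c.\ equivalence with the classical identification $\sigma_\ess(T(\varphi))=\overline{\Ran\varphi}$ (positive Wiener-Hopf operators have purely a.c.\ spectrum) to get $\sigma_\ess(T(\varphi))\subset\sigma_\ess(K(\varphi))$, and use a Fredholm-theoretic argument based on the trace class defect $R$ to obtain the reverse inclusion.
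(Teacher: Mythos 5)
Your proposal takes a genuinely different route from the paper. The paper never compares $K(\varphi)$ with $T(\varphi)$ directly; instead it writes $K(\varphi)=A(\psi)A(\psi)^*$, $T(\varphi)=B(\psi)B(\psi)^*$ with $\psi=\varphi^{1/2}$, proves that the difference $A(\psi)^*A(\psi)-B(\psi)^*B(\psi)$ (acting on a single space, $L^2(\bbR)$) is trace class, and then transfers the conclusions to $K$ and $T$ via the standard polar-decomposition identification of $X^*X|_{(\Ker X)^\perp}$ with $XX^*|_{(\Ker X^*)^\perp}$. The reason this factorisation is not just a convenience is exactly the step you flag as the ``main obstacle'': under the stated hypothesis $\delta>2$, the operator $R=T(\varphi)J-JK(\varphi)$ is \emph{not} provably trace class, and in fact appears to be only Hilbert--Schmidt. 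To see this, set $E=B(\psi)-JA(\psi)$ and $D=J^*B(\psi)-A(\psi)$; a short computation shows
\[
R=B(\psi)D^*+EA(\psi)^*,
\]
and one estimates, for $x\in[n,n+1)$,
\[
\bigl|x^{-\frac12+it}-n^{-\frac12+it}\bigr|\lesssim n^{-3/2}+n^{-1/2}\min\{|t|/n,2\},
\]
from which $\|E\|_{\bS_2}^2$ and $\|D\|_{\bS_2}^2$ are $\lesssim\sum_n\bigl(n^{-3}+n^{-1}(\log n)^{-\delta}\bigr)<\infty$, but the corresponding trace-norm sums $\sum_n\bigl(n^{-3/2}+n^{-1/2}(\log n)^{-\delta/2}\bigr)$ diverge for every $\delta$. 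So each term in $R=B D^*+EA^*$ is merely (bounded)$\times$(Hilbert--Schmidt), and your two-Hilbert-space Kato--Rosenblum argument does not get off the ground. The paper's comparison of $A^*A$ with $B^*B$ succeeds precisely because these products carry the weight $1/n$ rather than $1/\sqrt n$: writing them as $\frac1{2\pi}\sum_n\frac1n\jap{\cdot,u_n}v_n$ versus $\frac1{2\pi}\int\frac1x\jap{\cdot,u_x}v_x\,dx$, the $n$th rank-one difference has trace norm $\lesssim n^{-2}+n^{-1}(\log n)^{-\delta/2}$, which is summable exactly when $\delta>2$.

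Two smaller remarks. Your boundedness argument is mildly circular as written (you need $K(\varphi)$ to be an operator before $R$ makes sense), whereas the paper extracts boundedness of $A(\psi)$ from the uniform trace-norm bound on $D_N$ and then defines $K(\varphi)=A(\psi)A(\psi)^*$. And your plan for the essential spectrum is sound in outline (one inclusion from the isometric intertwiner, the other from Rosenblum's theorem identifying $\sigma_\ess(T(\varphi))$ with the purely a.c.\ spectrum), but it still rests on the unproven trace-class step. Unless you can establish $R\in\bS_1$ by a fundamentally different mechanism (not the one sketched), you will need to pass to the factorised operators as the paper does.
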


\begin{remark*}
\begin{enumerate}[1.]
\item
By a theorem of M.~Rosenblum \cite{R2}, the spectrum of a  self-adjoint Toeplitz operator is purely a.c. (unless the symbol is constant). The same applies to $T(\varphi)$, since it  is unitarily equivalent to a Toeplitz operator. 
\item
The location of the a.c. spectrum of a Toeplitz operator and its multiplicity function can be explicitly described in terms of the symbol; see \cite{I,R3} and \cite{SY1,SY2}. Therefore, the same applies to $T(\varphi)$. In particular, if the symbol $\varphi\in L^1(\bbR)$ is continuous, then the multiplicity function of $T(\varphi)$ coincides with $1/2$ times the multiplicity function of the operator of multiplication by $\varphi$ in $L^2(\bbR)$. For example, if $\varphi(t)$ is positive,  strictly increasing on $(-\infty,0)$ and strictly decreasing on $(0,\infty)$, then the spectrum of $T(\varphi)$ is $[0,\varphi(0)]$ with multiplicity one. 
\item
It seems to be an interesting open question to determine the class of symbols $\varphi$ that corresponds to  bounded operators $K(\varphi)$; it is unlikely that \eqref{eq:2} is optimal and it is not clear what the optimal condition should be. 
\end{enumerate}
\end{remark*}

Theorem~\ref{thm1} may suggest that the spectrum of $K(\varphi)$ coincides with that of $T(\varphi)$. In fact, this is false; we demonstrate this below by showing that in some natural asymptotic regime, eigenvalues of $K(\varphi)$ always appear (in contrast to $T(\varphi)$). For a self-adjoint operator $A$ let us denote by $n(\lambda;A)$ the rank of the spectral projection $E_A(\lambda,\infty)$. 
In other words, $n(\lambda;A)$ is the number of eigenvalues (counting multiplicities) of $A$ 
in $(\lambda,\infty)$ and $n(\lambda;A)=\infty$, if $A$ has some essential spectrum in $(\lambda,\infty)$. 
\begin{theorem}\label{thm3}
Let $\varphi$ be as in Theorem~\ref{thm1} and for $\alpha>0$, let 
$$
\varphi_\alpha(t)=\frac1\alpha\varphi(t/\alpha).
$$
Then for all sufficiently large $\alpha$ we have 
$$
n(\lambda; K(\varphi_\alpha))\geq\#\{j\in\bbN:\frac1j\widehat\varphi(0)>\lambda\}.
$$
In particular, the number of eigenvalues of $K(\varphi_\alpha)$ above the essential spectrum tends to infinity as $\alpha\to\infty$.
\end{theorem}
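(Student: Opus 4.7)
The plan is to use the scaling identity $\widehat{\varphi_\alpha}(u)=\widehat\varphi(\alpha u)$ together with the Riemann--Lebesgue lemma to show that $K(\varphi_\alpha)$ converges entrywise as $\alpha\to\infty$ to the diagonal operator $D=\diag(\widehat\varphi(0)/n)_{n\geq 1}$, and then to apply the min-max principle on a finite-dimensional trial subspace.

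A direct change of variable in the definition of the Fourier transform shows $\widehat{\varphi_\alpha}(u)=\widehat\varphi(\alpha u)$, so the entries of $K(\varphi_\alpha)$ are $k_\alpha(n,m)=(nm)^{-1/2}\widehat\varphi(\alpha\log(n/m))$. The diagonal $k_\alpha(n,n)=\widehat\varphi(0)/n$ is independent of $\alpha$, whereas for $n\neq m$ we have $\abs{\alpha\log(n/m)}\to\infty$ as $\alpha\to\infty$, so the Riemann--Lebesgue lemma (applicable because $\varphi\in L^1(\bbR)$) gives $k_\alpha(n,m)\to 0$.

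Now fix $\lambda>0$ and put $J=\#\{j\in\bbN:\widehat\varphi(0)/j>\lambda\}$; we may assume $J\geq 1$. On the $J$-dimensional subspace $V_J=\Span\{e_1,\dots,e_J\}\subset\ell^2(\bbN)$, the compression $P_J K(\varphi_\alpha)P_J$ is a $J\times J$ matrix which, by the entrywise convergence above, tends in operator norm to $\diag(\widehat\varphi(0)/n)_{n=1}^J$, whose least eigenvalue $\widehat\varphi(0)/J$ is strictly greater than $\lambda$. Hence for all sufficiently large $\alpha$ one has $\jap{K(\varphi_\alpha)v,v}>\lambda\norm{v}^2$ for every nonzero $v\in V_J$, and the variational characterisation of $n(\lambda;\cdot)$ for a bounded self-adjoint operator yields $n(\lambda;K(\varphi_\alpha))\geq \dim V_J=J$, which is exactly the asserted inequality.

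For the ``in particular'' assertion, Theorem~\ref{thm1} identifies the essential spectrum of $K(\varphi_\alpha)$ with that of $T(\varphi_\alpha)$, which, via the Wiener--Hopf/Toeplitz identification recalled after Theorem~\ref{thm1}, is contained in $[0,\norm{\varphi_\alpha}_\infty]=[0,\norm{\varphi}_\infty/\alpha]$. Once $\alpha>\norm{\varphi}_\infty/\lambda$, every point counted by $n(\lambda;K(\varphi_\alpha))$ lies above the essential spectrum and is therefore a genuine discrete eigenvalue; letting $\lambda\to 0$ (with $\alpha$ adjusted correspondingly) gives infinitely many discrete eigenvalues as $\alpha\to\infty$. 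The argument presents no serious obstacle --- the mechanism is just \emph{asymptotic diagonal dominance} as $\alpha\to\infty$ combined with min-max; the only nuance worth noting is the order of quantifiers, namely that $\lambda$ (and hence $J$) must be fixed before ``sufficiently large $\alpha$'' is chosen.
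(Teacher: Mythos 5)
Your proof is correct and follows essentially the same route as the paper: reduce by min--max to a finite truncation, use Riemann--Lebesgue to show the off-diagonal entries of $K_N(\varphi_\alpha)$ vanish as $\alpha\to\infty$, pass to the diagonal limit $\diag(\widehat\varphi(0)/n)$, and invoke the identification of the essential spectrum from Theorem~\ref{thm1}. The only cosmetic difference is that you fix $J=\#\{j:\widehat\varphi(0)/j>\lambda\}$ at the outset and exploit the strict inequality $\widehat\varphi(0)/J>\lambda$, whereas the paper keeps $N$ arbitrary and inserts a $+\eps$ before passing to the limit $\eps\to0$, $N\to\infty$; both dispositions are equivalent.
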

\begin{proof}
By min-max, for all $\lambda>0$ we have 
$n(\lambda;K(\varphi_\alpha))\geq n(\lambda; K_N(\varphi_\alpha))$.
By the Riemann-Lebesgue lemma, $\wh \varphi$ tends to zero at infinity and therefore, for all off-diagonal elements of $K_N(\varphi_\alpha)$  we have
$$
k_\alpha(n,m)=\frac1{\sqrt{nm}}\wh\varphi(\alpha\log\tfrac{n}{m})\to0, \quad \alpha\to\infty. 
$$
It follows that $\norm{K_N(\varphi_\alpha)-K_{N,\infty}}\to0$ as $\alpha\to\infty$, where $K_{N,\infty}$ is the diagonal  matrix with elements $\{\wh\varphi(0), \frac12\wh\varphi(0),\dots,\frac1N\wh\varphi(0)\}$ on the diagional. Thus, for any $\eps>0$,
$$
n(\lambda;K_N(\varphi_\alpha))\geq n(\lambda+\eps; K_{N,\infty})
$$
for all sufficiently large $\alpha$. Putting this together and sending  $\eps\to0$ and $N\to\infty$,
we get the desired inequality. 
By Theorem~\ref{thm1}, the essential spectrum of $K(\varphi_\alpha)$ is $[0,\norm{\varphi_\alpha}_{L^\infty}]=[0,\frac1\alpha\norm{\varphi}_{L^\infty}]$; as it shrinks to zero, the number of eigenvalues above it tends to infinity. 
\end{proof}

\begin{remark*}
\begin{enumerate}[1.]
\item
Observe that $T(\varphi)$ satisfies the estimate $\norm{T(\varphi)}\leq\norm{\varphi}_{L^\infty(\bbR)}$. 
In contrast to this, the last theorem shows that the norm of $K(\varphi)$ may be strictly greater than $\norm{\varphi}_{L^\infty(\bbR)}$. Moreover, considering the asymptotics $\alpha\to\infty$, we see that the estimate 
$$
\norm{K(\varphi)}\leq C\norm{\varphi}_{L^\infty(\bbR)}
$$
is false. 
\item
If $\wh\varphi$ tends to zero sufficiently fast at infinity, one can upgrade the above reasoning to conclude that if the eigenvalues of $K(\varphi_\alpha)$ are ordered non-increasingly, then the $j$'th eigenvalue satisfies 
\[
\lambda_j(K(\varphi_\alpha))=\frac1j\wh\varphi(0)+O(1/\alpha), \quad \alpha\to\infty.
\label{eq:13}
\]
This is exactly what was proven in \cite{BPP}, see Example~\ref{exa.2} below. 
\end{enumerate}
\end{remark*}

\section{Examples}\label{sec.exa}
Here we consider several examples of Hardy kernels with references to existing literature. 
\begin{example}
Let 
$$
k(x,y)=\frac1{x+y}, \quad \varphi(t)=\frac{\pi}{\cosh \pi t}. 
$$
This example is very special because in this case $k(x,y)$ is a Hankel kernel, i.e. it depends on the sum $x+y$. 
The corresponding matrix $K(\varphi)$ is the classical Hilbert's matrix, which was explicitly diagonalised by M.~Rosenblum in \cite{R1,R2} in terms of special functions (see also \cite{KS}). 
Rosenblum proved that $K(\varphi)$ has a purely a.c. spectrum $[0,\pi]$ of multiplicity one.

The asymptotic spectral density of the truncated Hilbert matrix was determined by Widom in \cite[Theorem 4.3]{Widom} (note that there is a factor of $2\pi$ missing in \cite{Widom}). See also \cite{Fedele} for an alternative proof and for a more general class of Hankel matrices. 

Our Theorems~\ref{thm2} and \ref{thm1} specialised to this case are in agreement with all of the above but do not add anything new. 

However, the scaled version of this example
$$
k_\alpha(x,y)=\frac1{\sqrt{xy}}\frac1{(x/y)^{\alpha/2}+(y/x)^{\alpha/2}}, 
\quad
\varphi_\alpha(t)=\frac{\pi}{\alpha\cosh(\frac{\pi t}{\alpha})}, \quad \alpha>0,
$$
seems to be new. By Theorem~\ref{thm1}, the a.c. spectrum of $K(\varphi_\alpha)$ is $[0,\pi/\alpha]$ and has multiplicity one. 
\end{example}

\begin{example}\label{exa.2}
For $\alpha>0$, let 
$$
k_\alpha(x,y)
=\frac1{\sqrt{xy}}\min\{(x/y)^\alpha,(y/x)^\alpha\}, 
\quad 
\varphi_\alpha(t)=\dfrac{2}{\alpha(1+(t/\alpha)^2)}.
$$
The operator $K(\varphi_\alpha)$ was introduced in \cite{Brevig} in connection with a question about composition operators on the Hardy space of Dirichlet series. Some estimates for the norm of $K(\varphi_\alpha)$ were given in \cite{Brevig}, and a detailed spectral analysis of this operator was accomplished in \cite{BPP}. 
It was established that $K(\varphi_\alpha)$ has a.c. spectrum $[0,2/\alpha]$ of multiplicity one, no singular continuous spectrum and finitely many eigenvalues above $2/\alpha$, satisfying \eqref{eq:13}. 
These facts are in full agreement with Theorems~\ref{thm1} and \ref{thm3}, as the Wiener-Hopf operator $T(\varphi_\alpha)$ has a purely a.c. spectrum $[0,2/\alpha]$ of multiplicity one.

This example is very special because, as shown in \cite{BPP}, the operator $K(\varphi_\alpha)$ is an inverse of a Jacobi matrix. In particular, since the spectrum of any Jacobi matrix is simple, all eigenvalues of $K(\varphi_\alpha)$ are simple. 
It was also shown that if a Hardy kernel matrix with a continuous kernel is an inverse of a Jacobi matrix, then it coincides, up to a factor, with $K(\varphi_\alpha)$ for some $\alpha>0$. 

The spectral density of $K(\varphi_\alpha)$ was not computed in \cite{BPP}, and Theorem~\ref{thm2} in this case seems to be new. The author is grateful to Uzy Smilansky for asking the question about spectral density in this context. 
\end{example}

\begin{example}
For $\alpha>0$, let 
$$
k_\alpha(x,y)=\frac1{\sqrt{xy}}\biggl(\sqrt{\frac{x}{y}}+\sqrt{\frac{y}{x}}\biggr)^{-\alpha}
=\frac1{\sqrt{xy}}\frac{(xy)^{\alpha/2}}{(x+y)^{\alpha}}, 
\quad
\varphi_\alpha(t)=\frac1{\Gamma(\alpha)}\abs{\Gamma(\tfrac{\alpha}{2}+it)}^2, 
$$
where $\Gamma$ is the Gamma-function. 
We note that the scaling in $\alpha$ here is different from the one in Theorem~\ref{thm3}. 
According to Theorem~\ref{thm1}, the essential spectrum and the a.c. spectrum of $K(\varphi_\alpha)$ is $[0,\varphi_\alpha(0)]$, with multiplicity one. 

For $\alpha=1$ this is the Hilbert matrix, and for $\alpha=2$ this is a variant of the so-called Bergman-Hilbert matrix, considered in \cite{G,DG,KS}. 
More generally, for all $\alpha\in\bbN$, the following matrix was considered in \cite{KS} (as part of a larger three-parameter family of infinite matrices): 
$$
B_\alpha=\{b_{n,m}\}_{n,m=1}^\infty, \quad
b_{n,m}=\frac{\sqrt{(n)_{\alpha-1}(m)_{\alpha-1}}}{(n+m-1)_{\alpha}}, 
$$
where $(x)_\alpha=x(x+1)\cdots(x+\alpha-1)$ is the Pochhammer symbol. 
This is not a Hardy kernel matrix, but for large $n,m$ it has the same asymptotics as $K(\varphi_\alpha)$. 
This matrix was explicitly diagonalised in \cite{KS} in terms of orthogonal polynomials; it was found that the a.c. spectrum of $B_\alpha$ is $[0,\varphi_\alpha(0)]$ with multiplicity one, that it has no singular continuous spectrum and that for large $\alpha$ there are some eigenvalues above the continuous spectrum. 

It is easy to see (cf. the argument of \cite[Proposition 9]{KS}) that $K(\varphi_{2})-B_2$ is a trace class operator, and therefore, by the Kato-Rosenblum theorem, the a.c. parts of $B_2$ and $K(\varphi_{2})$ are unitary equivalent and by Weyl's theorem (invariance of essential spectrum under compact perturbations) the essential spectra of $B_2$ and $K(\varphi_2)$ coincide. This is in agreement with Theorem~\ref{thm1}. 
It is not clear whether the trace class argument works for $\alpha>2$, but in any case comparing Theorem~\ref{thm1} with the results of \cite{KS} we see that the essential spectra and the a.c. spectra of $B_\alpha$ and $K(\varphi_\alpha)$ coincide.  

We are not aware of the spectral density of $K(\varphi_\alpha)$ having been discussed in the literature. 
\end{example}

\begin{example}
Let 
$$
k(x,y)=\frac{\log(x/y)}{x-y}, \quad
\varphi(t)=\frac{\pi^2}{(\cosh \pi t)^2},
$$
and $k(x,x)=1/x$ by continuity.
As far as we are aware, the spectral properties of the corresponding Hardy kernel matrix $K(\varphi)$ have not been considered in the literature, except for the norm bound in \cite[Inequality 342]{HLP} as a ``miscellaneous example''. (The norm bound there is $\pi^2$.)
Theorem~\ref{thm1} in this case ensures that the a.c. spectrum of $K(\varphi)$ is $[0,\pi^2]$ and has multiplicity one.
\end{example}

\begin{example}
A more general version of the previous example is 
$$
k(x,y)=\frac{\omega(\log(x/y))}{x-y}, \quad
\wh \varphi(u)=\frac{\omega(u)}{2\sinh(u/2)}, 
$$
where $\omega$ is a smooth odd function which satisfies the growth condition 
$$
\omega(u)=O(e^{\alpha\abs{u}}), \quad \alpha<1/2,
$$
as $\abs{u}\to\infty$. For example, $\omega(u)=2\sinh(\alpha u)$, $0<\alpha<1/2$ gives
$$
k(x,y)=\frac{(x/y)^\alpha-(y/x)^\alpha}{x-y}, \quad \varphi(t)=\frac{2\pi\sin(2\pi \alpha)}{\cos(2\pi\alpha)+\cosh(2\pi t)} 
$$
with $k(x,x)=2\alpha/x$ by continuity.
\end{example}

\begin{example}
Let 
$$
k(x,y)=\frac1{\sqrt{xy}}\frac{\sin (\log(x/y))}{\log(x/y)}, 
\quad 
\varphi(t)=\pi\chi_{(-1,1)}(t),
$$
and $k(x,x)=1/x$ by continuity. 
This example may be of interest because the symbol here is discontinuous. 
By Theorem~\ref{thm1}, the a.c. spectrum of $K(\varphi)$ is $[0,\pi]$ and has multiplicity one. 
By Theorem~\ref{thm2}, the asymptotic spectral density of $K(\varphi)$ is concentrated at the points $\pi$ and $0$. 
\end{example}

\section{Key notation and the outline of the proof}\label{sec.c}
First we discuss the proof of Theorem~\ref{thm2}. 
In order to motivate what comes next, we factorise $\varphi=\abs{\varphi}^{1/2}\varphi^{1/2}$, where $\varphi^{1/2}=\abs{\varphi}^{1/2}\sign(\varphi)$, and rewrite formula \eqref{eq:6} for the 
the quadratic form of $K_N(\varphi)$ as follows:
$$
\jap{K_N(\varphi)a,a}_{\bbC^N}
=\frac1{2\pi}\int_{-\infty}^\infty
\biggl(\abs{\varphi(t)}^{1/2}\sum_{n=1}^N a_nn^{-\frac12-it}\biggr)
\overline{\biggl(\varphi(t)^{1/2}\sum_{n=1}^N a_nn^{-\frac12-it}\biggr)}
dt \ .
$$
This suggests the following factorisation of $K_N(\varphi)$. 
For $\psi\in L^2(\bbR)$, let us define an operator $A_N(\psi):L^2(\bbR)\to\bbC^N$, 
$$
(A_N(\psi)f)_n=\frac1{\sqrt{2\pi}}\int_{-\infty}^\infty f(t)\psi(t)n^{-\frac12+it}dt\ , \quad n=1,\dots,N.
$$
Then 
$$
K_N(\varphi)=A_N(\varphi^{1/2})A_N(\abs{\varphi}^{1/2})^*.
$$
In a similar way, we factorise the operator $T_N(\varphi)$ 
$$
T_N(\varphi)=B_N(\varphi^{1/2})B_N(\abs{\varphi}^{1/2})^*, 
$$
where $B_N(\psi):L^2(\bbR)\to L^2(1,N)$, 
$$
(B_N(\psi)f)(x)=\frac1{\sqrt{2\pi}}\int_{-\infty}^\infty  f(t)\psi(t)x^{-\frac12+it}dt\ ,\quad  x\in(1,N).
$$
The main step of the proof of Theorem~\ref{thm2} is the proof of \eqref{eq:1} for $g(\lambda)=\lambda^m$, $m\in\bbN$ and for a suitable dense class of symbols $\varphi$. 
By the cyclicity of trace,
\begin{align*}
\Tr \bigl(K_N(\varphi)^m\bigr)
&=\Tr\bigl((A_N(\varphi^{1/2})A_N(\abs{\varphi}^{1/2})^*)^m\bigr)
=\Tr\bigl((A_N(\abs{\varphi}^{1/2})^*A_N(\varphi^{1/2}))^m\bigr)\ ,
\\
\Tr \bigl(T_N(\varphi)^m\bigr)
&=\Tr\bigl((B_N(\varphi^{1/2})B_N(\abs{\varphi}^{1/2})^*)^m\bigr)
=\Tr\bigl((B_N(\abs{\varphi}^{1/2})^*B_N(\varphi^{1/2}))^m\bigr)\ .
\end{align*}
Now let us compare $A_N^*A_N$ and $B_N^*B_N$. Observe that the integral kernel of $A_N(\abs{\varphi}^{1/2})^*A_N(\varphi^{1/2})$ is
$$
\frac1{2\pi}\abs{\varphi}^{1/2}(t_1)\varphi^{1/2}(t_2)\zeta_N(1+i(t_1-t_2)), 
\quad
\zeta_N(s):=\sum_{n=1}^N n^{-s}
$$
and similarly the integral kernel of $B_N(\abs{\varphi}^{1/2})^*B_N(\varphi^{1/2})$ is
$$
\frac1{2\pi}\abs{\varphi}^{1/2}(t_1)\varphi^{1/2}(t_2)\eta_N(1+i(t_1-t_2)), 
\quad
\eta_N(s):=\int_{1}^N x^{-s}dx.
$$
Using elementary estimates of $\zeta_N(s)-\eta_N(s)$, we shall prove
the trace norm estimate
$$
\norm{A_N(\abs{\varphi}^{1/2})^*A_N(\varphi^{1/2})-B_N(\abs{\varphi}^{1/2})^*B_N(\varphi^{1/2})}_{\bS_1}=O(1), \quad N\to\infty,
$$
for a suitable dense subclass of symbols $\varphi$. Here and in what follows $\norm{\cdot}_{\bS_1}$ is the trace norm.
Using this estimate, it is easy to derive the relation 
$$
\Tr \bigl((K_N(\varphi))^m\bigr)=\Tr\bigl((T_N(\varphi))^m\bigr)+O(1), \quad N\to\infty.
$$
The asymptotics of the trace in the right hand side is well known, see e.g. \cite[Section~8.6]{GS}: 
\[
\lim_{N\to\infty}(\log N)^{-1}\Tr\bigl((T_N(\varphi))^m\bigr)
=
\frac1{2\pi}\int_{-\infty}^\infty \varphi(t)^mdt \ .
\label{eq:5}
\]
By linearity, we obtain formula \eqref{eq:1} for polynomials $g$. 
Standard arguments allow us to extend this to all symbols $\varphi\in L^1(\bbR)$ and all Lipschitz continuous functions $g$.

The outline of the proof of Theorem~\ref{thm1} is similar. Recall that here $\varphi\geq0$ by hypothesis. We denote $\psi=\varphi^{1/2}$ and write 
$$
K(\varphi)=A(\psi)A(\psi)^*, \quad
T(\varphi)=B(\psi)B(\psi)^*, 
$$
where $A(\psi):L^2(\bbR)\to\ell^2(\bbN)$ and $B(\psi):L^2(\bbR)\to L^2(0,\infty)$ are defined by 
\begin{align}
(A(\psi)f)_n&=\frac1{\sqrt{2\pi}}\int_{-\infty}^\infty \psi(t) f(t)n^{-\frac12+it}dt\ ,
\label{eq:7}
\\
(B(\psi)f)(x)&=\frac1{\sqrt{2\pi}}\int_{-\infty}^\infty \psi(t) f(t)x^{-\frac12+it}dt \  .
\label{eq:8}
\end{align}
One slight technical complication is that $A(\psi)$ and $B(\psi)$
are not automatically bounded for $\varphi\in L^1(\bbR)$; but they are bounded under the additional assumptions on $\varphi$, listed in the hypothesis of Theorem~\ref{thm1}. 
We prove that under these assumptions, the difference
$$
A(\psi)^*A(\psi)-B(\psi)^*B(\psi)
$$
is a trace class operator. After this, Theorem~\ref{thm1} follows by an application of the Kato-Rosenblum theorem (invariance of a.c. spectrum under trace class perturbations) and Weyl's theorem (invariance of essential spectrum under compact perturbations).

\section{Proof of Theorem~\ref{thm2}}\label{sec.d}

For $\psi\in L^2(\bbR)$, let $A_N(\psi):L^2(\bbR)\to\bbC^N$ and $B_N(\psi):L^2(\bbR)\to L^2(1,N)$ be the operators defined in the previous section. 
\begin{lemma}\label{lma.1}
Let $\varphi\in L^1(\bbR)$ satisfy 
\[
\int_{-\infty}^\infty \abs{\varphi(t)}(\log(2+\abs{t}))^\delta dt<\infty
\label{eq:4}
\]
for some $\delta>2$. Let 
\[
D_N(\varphi)=A_N(\abs{\varphi}^{1/2})^*A_N(\varphi^{1/2})-B_N(\abs{\varphi}^{1/2})^*B_N(\varphi^{1/2}). 
\label{eq:9}
\]
Then $D_N(\varphi)$ converges  in the trace norm as $N\to\infty$; in particular, $\norm{D_N(\varphi)}_{\bS_1}=O(1)$. 
\end{lemma}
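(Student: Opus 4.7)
The plan: The kernel of $D_N(\varphi)$ is $\tfrac1{2\pi}|\varphi|^{1/2}(t_1)\varphi^{1/2}(t_2)\,r_N(t_1-t_2)$, where
$$
r_N(\tau)=\zeta_N(1+i\tau)-\eta_N(1+i\tau)=\sum_{n=1}^N n^{-1-i\tau}-\int_0^{\log N}e^{-i\tau u}\,du.
$$
For $v\in\bbR$, let $P(v)$ be the rank-one operator on $L^2(\bbR)$ with kernel $|\varphi|^{1/2}(t_1)\varphi^{1/2}(t_2)e^{-i(t_1-t_2)v}$. Writing $P(v)=|\phi_v\rangle\langle\psi_v|$ with $\phi_v(t)=|\varphi|^{1/2}(t)e^{-itv}$ and $\psi_v(t)=\varphi^{1/2}(t)e^{-itv}$, one checks that $\|P(v)\|_{\bS_1}=\|\varphi\|_1$ and
$$
\|P(v)-P(v')\|_{\bS_1}\le 2\|\varphi\|_1^{1/2}\,\omega(v-v'),\qquad \omega(\eps)^2:=\int|\varphi(t)||e^{-it\eps}-1|^2dt.
$$
Since $2\pi D_N=\sum_{n=1}^N P(\log n)/n-\int_0^{\log N}P(u)\,du$, the task is to exhibit cancellation between the atomic sum and the continuous integral.

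I would pair the $n$-th atom with the portion of the integral on $[\log n,\log(n+1))$, of length $b_n=\log(1+1/n)$. Writing $\alpha_n=1/(nb_n)=1+O(1/n)$ and using $P(\log n)/n=\alpha_n\int_{\log n}^{\log(n+1)}P(\log n)\,du$, one reorganises
$$
2\pi D_N=\frac{P(\log N)}{N}+\sum_{n=1}^{N-1}\int_{\log n}^{\log(n+1)}\Bigl[(\alpha_n-1)P(\log n)+\bigl(P(\log n)-P(u)\bigr)\Bigr]du.
$$
The scalar part contributes $(\alpha_n-1)b_n\|\varphi\|_1=\bigl(1/n-\log(1+1/n)\bigr)\|\varphi\|_1=O(1/n^2)$, summable. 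The operator-difference part contributes at most $2\|\varphi\|_1^{1/2}\int_0^{b_n}\omega(s)ds\le 2\|\varphi\|_1^{1/2}\,b_n\,\wt\omega(1/n)$, where $\wt\omega(\eps)^2:=\int|\varphi(t)|\min(4,t^2\eps^2)dt\ge\omega(\eps)^2$ is non-decreasing in $\eps$.

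The main obstacle, and where the hypothesis $\delta>2$ in \eqref{eq:4} becomes essential, is proving $\sum_n \wt\omega(1/n)/n<\infty$. Setting $\wt F(R):=\int_{\{|t|>R\}}|\varphi|dt$, \eqref{eq:4} gives $\wt F(R)=O((\log(2+R))^{-\delta})$. Splitting
$$
\wt\omega(1/n)^2\le n^{-2}\!\int_{\{|t|\le 2n\}}\!|\varphi(t)|t^2dt+4\wt F(2n),
$$
the tail is directly $O((\log n)^{-\delta})$. For the truncated second moment, integration by parts in the distribution function yields $R^{-2}\int_{\{|t|\le R\}}|\varphi|t^2dt=-\wt F(R)+2R^{-2}\int_0^R s\wt F(s)ds$, whose dominant contribution from $s\asymp R$ is again $O((\log R)^{-\delta})$. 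Hence $\wt\omega(1/n)=O((\log n)^{-\delta/2})$, so $\sum_n \wt\omega(1/n)/n$ is majorised by $\sum_n 1/(n(\log n)^{\delta/2})$, convergent precisely when $\delta>2$.

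Combining these bounds, the series in the display above converges absolutely in $\bS_1$ uniformly in $N$, so $\{D_N(\varphi)\}$ is Cauchy in trace norm; the boundary term $P(\log N)/(2\pi N)$, of trace norm $\|\varphi\|_1/(2\pi N)$, tends to zero. This yields both $\|D_N(\varphi)\|_{\bS_1}=O(1)$ and the trace-norm convergence of $D_N(\varphi)$. The naive alternative of bounding an expression of the form $\|M_a C_{r_N} M_b\|_{\bS_1}$ by $\|\check r_N\|_1\|a\|_2\|b\|_2$ fails, because $\check r_N$ is a signed measure of total variation $\sum_n 1/n+\log N$; pairing atoms with integral intervals first is what extracts the cancellation.
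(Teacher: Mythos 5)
Your proof is correct and follows essentially the same strategy as the paper's: pair the $n$-th atom of the discrete sum with a neighbouring unit-length slice of the integral, exploit the rank-one structure to estimate each paired difference in trace norm via the elementary bound $\abs{e^{ia}-1}\le\min\{\abs{a},2\}$, and use hypothesis \eqref{eq:4} to get a term of size $O\bigl(n^{-1}(\log n)^{-\delta/2}\bigr)$, summable exactly when $\delta>2$. The only cosmetic differences are that you work in the $u=\log x$ variable, pair $n$ with $[\log n,\log(n+1))$ rather than with $[n-1,n]$, and establish the $(\log n)^{-\delta}$ bound for the truncated second moment by an integration-by-parts in the distribution function rather than the paper's direct pointwise bound on $(\log(2+\abs{t}))^{-\delta}\min\{t^2/n^2,1\}$.
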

\begin{proof}
For $x\geq1$, denote 
$$
u_x(t)=\varphi^{1/2}(t)x^{-it}, \quad
v_x(t)=\abs{\varphi}^{1/2}(t)x^{-it}. 
$$
We consider $u_x$ and $v_x$ as elements of $L^2(\bbR)$; observe that 
$$
\norm{v_x}^2_{L^2(\bbR)}=\norm{u_x}^2_{L^2(\bbR)}=\norm{\varphi}_{L^1(\bbR)}.
$$
Below $\jap{\cdot,\cdot}$ is the inner product in $L^2(\bbR)$. 
With this notation we can write
\begin{align*}
A_N(\abs{\varphi}^{1/2})^*A_N(\varphi^{1/2})&=
\frac1{2\pi}\sum_{n=1}^N \frac1n \jap{\cdot,u_n}v_n, 
\\
B_N(\abs{\varphi}^{1/2})^*B_N(\varphi^{1/2})&=
\frac1{2\pi}\int_1^N \frac1x\jap{\cdot,u_x}v_x \ dx
\\
&=\frac1{2\pi}\sum_{n=2}^N\int_{n-1}^n \frac1x\jap{\cdot,u_x}v_x\  dx\ .
\end{align*}
Our aim is to estimate the trace norm of the difference of $n$'th terms in the two sums in the right hand sides. For $n\geq2$ we have 
\begin{align*}
\frac1n\jap{\cdot,u_n}v_n-&\int_{n-1}^n \frac1x\jap{\cdot,u_x}v_x\ dx
=\int_{n-1}^n \biggl(\frac1n-\frac1x\biggr)\jap{\cdot,u_n}v_n\ dx
\\
&+\int_{n-1}^n \frac1x\jap{\cdot,u_n-u_x}v_n\ dx
+\int_{n-1}^n\frac1x\jap{\cdot,u_x}(v_n-v_x)\ dx\ .
\end{align*}
The first term is easy to estimate:
$$
\Norm{\int_{n-1}^n \biggl(\frac1n-\frac1x\biggr)\jap{\cdot,u_n}v_n\ dx}_{\bS_1}
\leq 
\frac1{n(n-1)}\norm{\jap{\cdot,u_n}v_n}_{\bS_1}=\frac1{n(n-1)}\norm{\varphi}_{L^1(\bbR)}.
$$
In order to estimate the second and third terms, we need to consider the differences $u_n-u_x$ and $v_n-v_x$. We have
$$
u_n(t)-u_x(t)=\varphi^{1/2}(t)(n^{-it}-x^{-it})=\varphi^{1/2}(t)(e^{-it\log n}-e^{-it\log x})\ .
$$
We use the elementary estimate $\abs{e^{ia}-1}\leq \min\{\abs{a},2\}$. 
Then, for $n-1\leq x\leq n$, we have 
$$
\abs{u_n(t)-u_x(t)}
\leq 
\abs{\varphi}^{1/2}(t)\min\{\abs{t}\Abs{\log \tfrac{x}{n}}, 2\}
\leq 
2\abs{\varphi}^{1/2}(t)\min\{\tfrac{\abs{t}}{n}, 1\},
$$
because $\Abs{\log \tfrac{x}{n}}\leq \Abs{\log (1-\frac1n)}\leq 2/n$. 
It follows that 
$$
\norm{u_n-u_x}_{L^2(\bbR)}^2
\leq
4\int_{-\infty}^\infty \abs{\varphi(t)}\min\{\tfrac{t^2}{n^2}, 1\}dt
=
4\int_{-\infty}^\infty \abs{\varphi(t)}(\log(2+\abs{t}))^\delta F_n(t)dt,
$$
where 
$$
F_n(t)=(\log(2+\abs{t}))^{-\delta}\min\{\tfrac{t^2}{n^2}, 1\}. 
$$
Elementary considerations show that 
$$
F_n(t)\leq C(\log n)^{-\delta},
$$
and therefore 
$$
\norm{u_n-u_x}_{L^2(\bbR)}^2
\leq
C(\log n)^{-\delta}\int_{-\infty}^\infty \abs{\varphi(t)}(\log(2+\abs{t}))^\delta dt.
$$
Of course, exactly the same estimate holds for $v_n-v_x$. 
Putting this together, we find
$$
\Norm{\frac1n\jap{\cdot,u_n}v_n-\int_{n-1}^n \frac1x\jap{\cdot,u_x}v_x\ dx}_{\bS_1}
\leq C(\varphi)(n^{-2}+n^{-1}(\log n)^{-\delta/2}). 
$$
Since by assumption $\delta>2$, it follows that the operator $D_N(\varphi)$ is represented as a partial sum of a series that converges absolutely in trace norm. This yields the required statement. 
\end{proof}

\begin{lemma}\label{lma.2}
Let $\psi\in L^\infty(\bbR)$; then the operator $B_N(\psi)$ satisfies the operator norm estimate
$$
\norm{B_N(\psi)}\leq \norm{\psi}_{L^\infty}. 
$$
\end{lemma}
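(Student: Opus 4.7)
The plan is to reduce $B_N(\psi)$ to an operator that is manifestly controlled by Plancherel's theorem, via the exponential change of variable that the paper has already identified as turning $T$ into a Wiener--Hopf operator.

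Concretely, for $f\in L^2(\bbR)$ I would compute $\norm{B_N(\psi)f}_{L^2(1,N)}^2 = \int_1^N \abs{(B_N(\psi)f)(x)}^2 dx$ and substitute $x=e^u$, $dx=e^u\,du$, so that $x^{-1/2+it}=e^{-u/2}e^{itu}$. Setting
\[
g(u) = e^{u/2}(B_N(\psi)f)(e^u) = \frac{1}{\sqrt{2\pi}}\int_{-\infty}^\infty \psi(t)f(t)\, e^{itu}\,dt,
\]
the substitution converts the integrand $\abs{(B_N(\psi)f)(x)}^2\,dx$ into $\abs{g(u)}^2\,du$ on $u\in(0,\log N)$. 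Thus
\[
\norm{B_N(\psi)f}_{L^2(1,N)}^2 = \int_0^{\log N}\abs{g(u)}^2\,du \leq \int_{-\infty}^\infty \abs{g(u)}^2\,du.
\]

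Next I would identify $g$ as (a normalization of) the inverse Fourier transform of $\psi f\in L^2(\bbR)$ and invoke the Plancherel identity to obtain
\[
\int_{-\infty}^\infty \abs{g(u)}^2\,du = \int_{-\infty}^\infty \abs{\psi(t)f(t)}^2\,dt \leq \norm{\psi}_{L^\infty}^2\,\norm{f}_{L^2(\bbR)}^2.
\]
Combining the two displays yields $\norm{B_N(\psi)f}_{L^2(1,N)}\leq \norm{\psi}_{L^\infty}\norm{f}_{L^2(\bbR)}$, which is the claim.

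There is no real obstacle here: the only point requiring slight care is justifying the Fubini-style manipulation and the application of Plancherel when $\psi f$ is merely in $L^2$ (so the integral defining $g$ may not converge absolutely). The standard remedy is to first prove the bound for $f$ in a dense subclass where $\psi f\in L^1\cap L^2$ (e.g.\ compactly supported bounded $f$), so that the formula for $g$ is literally an absolutely convergent integral and Plancherel applies directly, and then extend by density using the operator-norm bound just obtained.
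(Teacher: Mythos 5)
Your proof is correct and is essentially the same as the paper's: both recognize $(B_N(\psi)f)(x)$ as a constant multiple of the Fourier transform of $\psi f$ evaluated at $\pm\log x$, change variables to reduce the $L^2(1,N)$ norm to an integral over $(0,\log N)$, drop that restriction, and apply Plancherel together with the trivial bound $\norm{\psi f}_{L^2}\leq\norm{\psi}_{L^\infty}\norm{f}_{L^2}$. The density remark at the end is a sensible technical precaution but not a genuine departure.
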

\begin{proof}
We have 
$$
(B_N(\psi)f)(x)=\sqrt{2\pi}\frac1{\sqrt{x}}\widehat{\psi f}(-\log x), 
$$
and therefore
\begin{align*}
\int_1^N\abs{(B_N(\psi)f)(x)}^2dx
&=
2\pi\int_1^N\abs{\widehat{\psi f}(-\log x)}^2\frac{dx}{x}
=
2\pi\int_0^{\log N}\abs{\widehat{\psi f}(-t)}^2dt
\\
&\leq
2\pi \int_{-\infty}^\infty \abs{\widehat{\psi f}(t)}^2dt
=
\norm{\psi f}_{L^2(\bbR)}^2
\leq 
\norm{\psi}_{L^\infty(\bbR)}^2\norm{f}_{L^2(\bbR)}^2,
\end{align*}
as required.
\end{proof}

\begin{lemma}\label{lma.3}
Let $\varphi\in L^\infty(\bbR)$ satisfy condition \eqref{eq:4} for some $\delta>2$. Then 
the operator $A_N(\abs{\varphi}^{1/2})$ satisfies the operator norm estimate 
$\norm{A_N(\abs{\varphi}^{1/2})}=O(1)$ as $N\to\infty$. 
\end{lemma}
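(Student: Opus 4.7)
The plan is to deduce this estimate as a straightforward corollary of Lemma~\ref{lma.1} applied to the non-negative symbol $\abs{\varphi}$, combined with the $L^\infty$ norm bound of Lemma~\ref{lma.2}. The key observation is that
$$
\norm{A_N(\abs{\varphi}^{1/2})}^2 = \norm{A_N(\abs{\varphi}^{1/2})^* A_N(\abs{\varphi}^{1/2})},
$$
so it suffices to bound the self-adjoint product uniformly in $N$, and this product can be compared to its continuous analogue $B_N(\abs{\varphi}^{1/2})^* B_N(\abs{\varphi}^{1/2})$.

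First, I would note that condition \eqref{eq:4} forces $\varphi\in L^1(\bbR)$, so $\abs{\varphi}$ lies in $L^1(\bbR)$ and inherits the weighted integrability property \eqref{eq:4}. I then apply Lemma~\ref{lma.1} with $\varphi$ replaced by the non-negative symbol $\abs{\varphi}$. In the notation of that lemma, $\abs{\varphi}^{1/2}$ appears symmetrically in both factors, so the conclusion reads
$$
\Norm{A_N(\abs{\varphi}^{1/2})^* A_N(\abs{\varphi}^{1/2}) - B_N(\abs{\varphi}^{1/2})^* B_N(\abs{\varphi}^{1/2})}_{\bS_1} = O(1), \quad N\to\infty.
$$
In particular, the operator norm of this difference is $O(1)$.

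Second, Lemma~\ref{lma.2} applied with $\psi = \abs{\varphi}^{1/2}\in L^\infty(\bbR)$ gives the uniform bound
$$
\norm{B_N(\abs{\varphi}^{1/2})^* B_N(\abs{\varphi}^{1/2})}\leq \norm{\abs{\varphi}^{1/2}}_{L^\infty}^2 = \norm{\varphi}_{L^\infty}.
$$
Combining the two bounds via the triangle inequality yields
$$
\norm{A_N(\abs{\varphi}^{1/2})}^2 \leq \norm{\varphi}_{L^\infty} + O(1) = O(1), \quad N\to\infty,
$$
which is the desired conclusion.

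There is no real obstacle here: all the substantive analysis (the comparison between the discrete Dirichlet-type sum and its integral analogue) has been carried out in Lemma~\ref{lma.1}, and this lemma is in effect a bookkeeping consequence obtained by symmetrising the symbol. The only point that merits a brief check is the legitimacy of invoking Lemma~\ref{lma.1} with $\abs{\varphi}$ in place of $\varphi$, which is immediate since condition \eqref{eq:4} depends only on $\abs{\varphi}$.
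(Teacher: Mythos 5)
Your proof is correct and follows the same route as the paper: reduce to the non-negative symbol $\abs{\varphi}$ (the paper phrases this as ``it suffices to prove the statement for $\varphi\geq0$''), then apply Lemma~\ref{lma.1} to control $D_N$ and Lemma~\ref{lma.2} to bound the $B_N^*B_N$ term, and combine by the triangle inequality.
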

\begin{proof}
Of course, it suffices to prove the statement for $\varphi\geq0$. In this case, 
combining the results of two previous Lemmas, we find
\begin{align*}
\norm{A_N(\abs{\varphi}^{1/2})}^2
&=
\norm{A_N(\abs{\varphi}^{1/2})^*A_N(\abs{\varphi}^{1/2})}
\\
&\leq
\norm{D_N(\varphi)}+\norm{B_N(\abs{\varphi}^{1/2})^*B_N(\abs{\varphi}^{1/2})}
=O(1)
\end{align*}
as $N\to\infty$. 
\end{proof}

\begin{lemma}\label{lma.4}
Let $\varphi\in L^\infty(\bbR)$ satisfy  \eqref{eq:4} for some $\delta>2$.
Then for any $m\in\bbN$ we have 
$$
\lim_{N\to\infty}(\log N)^{-1}\Tr\bigl((K_N(\varphi))^m\bigr)
=
\frac1{2\pi}\int_{-\infty}^\infty \varphi(t)^mdt \ .
$$
\end{lemma}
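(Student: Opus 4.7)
The plan is to reduce the trace of $K_N(\varphi)^m$ to the trace of the analogous Wiener–Hopf power $T_N(\varphi)^m$, and then invoke the known asymptotics \eqref{eq:5}. Write $\psi_+=\abs{\varphi}^{1/2}$ and $\psi_-=\varphi^{1/2}$, and set
\[
C_N:=A_N(\psi_+)^*A_N(\psi_-),\qquad C_N':=B_N(\psi_+)^*B_N(\psi_-).
\]
By the factorisations $K_N(\varphi)=A_N(\psi_-)A_N(\psi_+)^*$ and $T_N(\varphi)=B_N(\psi_-)B_N(\psi_+)^*$ together with the cyclicity of the trace,
\[
\Tr\bigl((K_N(\varphi))^m\bigr)=\Tr\bigl(C_N^m\bigr),\qquad \Tr\bigl((T_N(\varphi))^m\bigr)=\Tr\bigl((C_N')^m\bigr).
\]

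The first step is to show that $C_N$ and $C_N'$ are operator-norm bounded uniformly in $N$. For $C_N'$ this is immediate from Lemma~\ref{lma.2}. For $C_N$ it follows from Lemma~\ref{lma.3} applied to $\psi_+$, since $A_N(\psi_-)=A_N(\psi_+)M$ where $M$ is multiplication by $\sign(\varphi)$ on $L^2(\bbR)$, which is a contraction; thus $\norm{C_N}\leq\norm{A_N(\psi_+)}^2=O(1)$. The second step is the central estimate, already supplied by Lemma~\ref{lma.1}: the difference $C_N-C_N'=D_N(\varphi)$ is uniformly bounded in the trace norm.

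Given these two facts, the passage from $m=1$ to general $m$ is a telescoping argument. Using the identity
\[
C_N^m-(C_N')^m=\sum_{j=0}^{m-1}C_N^{j}\,(C_N-C_N')\,(C_N')^{m-1-j}
\]
together with the bound $\norm{XYZ}_{\bS_1}\leq\norm{X}\,\norm{Y}_{\bS_1}\,\norm{Z}$, one obtains
\[
\norm{C_N^m-(C_N')^m}_{\bS_1}\leq m\,R^{m-1}\,\norm{D_N(\varphi)}_{\bS_1}=O(1),\qquad N\to\infty,
\]
where $R=\sup_N\max\{\norm{C_N},\norm{C_N'}\}<\infty$. Since $\abs{\Tr X}\leq\norm{X}_{\bS_1}$, this yields
\[
\Tr\bigl((K_N(\varphi))^m\bigr)=\Tr\bigl((T_N(\varphi))^m\bigr)+O(1),\qquad N\to\infty.
\]

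Finally, one divides by $\log N$ and invokes the known Wiener–Hopf trace asymptotics \eqref{eq:5} (equivalently, the First Szeg\H{o} Limit Theorem applied after the exponential change of variables that converts $T_N(\varphi)$ into a truncated Wiener–Hopf operator), obtaining the claimed limit. No genuine obstacle is expected: the only real input is Lemma~\ref{lma.1}, and the remaining telescoping bookkeeping is routine once the uniform operator-norm bounds on $C_N$ and $C_N'$ are in place.
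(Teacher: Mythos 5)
Your proof is correct and follows the same route as the paper: factorise $K_N$ and $T_N$, move to $A_N^*A_N$ and $B_N^*B_N$ via cyclicity of the trace, telescope the difference of $m$-th powers using Lemma~\ref{lma.1} for the trace-norm bound and Lemmas~\ref{lma.2}--\ref{lma.3} for the uniform operator-norm bounds, and finish with \eqref{eq:5}. The only cosmetic difference is that you work with $C_N=A_N(\abs{\varphi}^{1/2})^*A_N(\varphi^{1/2})$ (which is exactly the $D_N$ combination of Lemma~\ref{lma.1}), while the paper writes the adjoint $A_N(\varphi^{1/2})^*A_N(\abs{\varphi}^{1/2})$; since trace norms are adjoint-invariant this makes no difference, and your version is arguably cleaner in that it quotes Lemma~\ref{lma.1} verbatim.
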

\begin{proof}
The case $m=1$ is a direct calculation (see Section~\ref{sec.b}); below we assume $m\geq2$. 
Using the cyclicity of trace, we find 
\begin{align*}
\Tr \bigl((K_N(\varphi))^m\bigr)&-\Tr \bigl((T_N(\varphi))^m\bigr)
\\
&=
\Tr\bigl((A_N(\varphi^{1/2})A_N(\abs{\varphi}^{1/2})^*)^m\bigr)
-
\Tr\bigl((B_N(\varphi^{1/2})B_N(\abs{\varphi}^{1/2})^*)^m\bigr)
\\
&=
\Tr\bigl((A_N(\varphi^{1/2})^*A_N(\abs{\varphi}^{1/2}))^m\bigr)
-
\Tr\bigl((B_N(\varphi^{1/2})^*B_N(\abs{\varphi}^{1/2}))^m\bigr)\ .
\end{align*}
Denoting 
$$
X=A_N(\varphi^{1/2})^*A_N(\abs{\varphi}^{1/2}), 
\quad
Y=B_N(\varphi^{1/2})^*B_N(\abs{\varphi}^{1/2}), 
$$
we have 
$$
X^m-Y^m=(X-Y)X^{m-1}+\cdots+Y^{m-1}(X-Y),
$$
and therefore
$$
\abs{\Tr(X^m-Y^m)}\leq \norm{X-Y}_{\bS_1}(\norm{X}^{m-1}+\cdots+\norm{Y}^{m-1}). 
$$
By Lemma~\ref{lma.1}, we have $\norm{X-Y}_{\bS_1}=O(1)$ and by Lemmas~\ref{lma.2} and \ref{lma.3} the sum in the brackets above is also $O(1)$ as $N\to\infty$. We conclude that 
$$
\Tr (K_N(\varphi)^m)-\Tr (T_N(\varphi)^m)=O(1), \quad N\to\infty. 
$$
Using \eqref{eq:5}, we conclude the proof. 
\end{proof}

The rest of the proof of Theorem~\ref{thm2} is a standard approximation argument. 

\begin{lemma}\label{lma.5}
Let $g$ be a Lipschitz continuous function with $g(0)=0$, and let $\varphi\in L^\infty(\bbR)$ be a real-valued symbol with compact support. Then the asymptotic formula \eqref{eq:1} holds true. 
\end{lemma}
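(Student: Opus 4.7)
The plan is to pass from Lemma~\ref{lma.4} (polynomial test functions) to Lipschitz $g$ by comparing $K_N(\varphi)$ with its Wiener--Hopf analogue $T_N(\varphi)$, for which the first Szeg\H{o} limit theorem with Lipschitz $g$ and $g(0)=0$ is classical (see \cite[Section~8.6]{GS}). By Lemmas~\ref{lma.2}--\ref{lma.3} I fix $R>0$ such that $\spec(K_N(\varphi))\cup\spec(T_N(\varphi))\subset[-R,R]$ for every $N$; it is then enough to show $\Tr g(K_N(\varphi))-\Tr g(T_N(\varphi))=O(1)$ as $N\to\infty$.

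I treat $\varphi\ge 0$ first, where the argument is cleanest. With $\psi=\varphi^{1/2}$, the factorisations $K_N(\varphi)=A_N(\psi)A_N(\psi)^*$ and $T_N(\varphi)=B_N(\psi)B_N(\psi)^*$, together with the equality of non-zero spectra of $XX^*$ and $X^*X$ and the hypothesis $g(0)=0$, give
$$
\Tr g(K_N(\varphi))=\Tr g(A_N(\psi)^*A_N(\psi)),\qquad \Tr g(T_N(\varphi))=\Tr g(B_N(\psi)^*B_N(\psi)),
$$
where now the operators on the right are bounded, self-adjoint and positive on $L^2(\bbR)$. I would then invoke the standard Lipschitz trace inequality $|\Tr(g(X)-g(Y))|\le L\,\|X-Y\|_{\bS_1}$ for self-adjoint $X,Y$ with $X-Y\in\bS_1$ (obtained for $g\in C^1$ from $\tfrac{d}{dt}\Tr g(Y_t)=\Tr(g'(Y_t)(X-Y))$ with $Y_t=Y+t(X-Y)$, then extended by mollification together with dominated convergence on the trace-class spectrum) and combine it with Lemma~\ref{lma.1} to obtain the desired $O(1)$ bound.

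For signed $\varphi$ the factorisation $K_N(\varphi)=A_N(\varphi^{1/2})A_N(|\varphi|^{1/2})^*$ does not produce a self-adjoint positive ``square'', so the above inequality cannot be applied directly. My plan for this case is the moment method. Lemma~\ref{lma.4} yields convergence of all polynomial moments of the signed measures $\tilde\mu_N(dx)=(\log N)^{-1}\sum_j\lambda_j(K_N(\varphi))\,\delta_{\lambda_j(K_N(\varphi))}$ to those of $\tilde\sigma(dx)=(x/2\pi)\,d\sigma(x)$, where $\sigma$ is the pushforward of Lebesgue measure by $\varphi$. The inequality $|K_N(\varphi)|\le K_N(|\varphi|)$ in the quadratic-form sense, combined with the trace formula from Section~\ref{sec.b}, gives $\|K_N(\varphi)\|_{\bS_1}=O(\log N)$, so $|\tilde\mu_N|([-R,R])$ is uniformly bounded. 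Banach--Alaoglu and the determinacy of the moment problem on the compact $[-R,R]$ then give weak-$*$ convergence $\tilde\mu_N\to\tilde\sigma$, and testing against the bounded function $h(x)=g(x)/x$, which is continuous on $[-R,R]\setminus\{0\}$ with $\tilde\sigma(\{0\})=0$, recovers \eqref{eq:1}.

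The hard part will be this last step of the signed case: passing from weak-$*$ convergence of signed $\tilde\mu_N$ to convergence against the discontinuous $h=g/x$. For positive measures this is routine by the Portmanteau theorem, but for signed measures it is not; the usual route via $|\tilde\mu_N|([-R,R])\to|\tilde\sigma|([-R,R])$ is itself equivalent to \eqref{eq:1} applied to $g(x)=|x|$. I plan to handle this by splitting $[-R,R]$ into the two support sides $(0,R]$ and $[-R,0)$, on each of which $\tilde\mu_N$ restricts to a positive measure whose moments still converge (by testing Lemma~\ref{lma.4} against polynomials supported in the relevant side after a cutoff), and then bounding the near-zero contribution directly via $|g(x)|\le L|x|$ and the uniform trace-class bound $\|K_N(\varphi)\|_{\bS_1}=O(\log N)$.
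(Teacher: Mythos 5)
Your plan diverges substantially from the paper's proof, and the signed case contains a genuine gap.

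The paper treats arbitrary real-valued compactly supported $\varphi\in L^\infty$ in one stroke, by bracketing: choose $\Lambda$ with $\norm{\varphi}_\infty\leq\Lambda$ and $\norm{K_N(\varphi)}\leq\Lambda$ (Lemma~\ref{lma.3}), then construct polynomials $p_\pm$ with $p_\pm(0)=0$, $p_-\leq g\leq p_+$ and $p_+-p_-\leq\eps$ on $[-\Lambda,\Lambda]$, so that $\Tr p_-(K_N)\leq\Tr g(K_N)\leq\Tr p_+(K_N)$. Lemma~\ref{lma.4} applied to linear combinations of monomials then pins down both $\limsup$ and $\liminf$ up to $O(\eps)$, and one sends $\eps\to0$. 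No case distinction, no operator-function calculus beyond polynomials, and no appeal to the Lipschitz version of the Szeg\H{o} limit theorem for $T_N$ (only the polynomial version \eqref{eq:5}, which is all that Lemma~\ref{lma.4} needs).

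Your $\varphi\geq0$ argument, while workable, is considerably heavier: it needs the trace inequality $\abs{\Tr g(X)-\Tr g(Y)}\leq L\norm{X-Y}_{\bS_1}$ (Daletskii--Krein plus mollification and dominated convergence, which is fine here because $A_N^*A_N$ is finite rank and $B_N^*B_N$ is trace class), together with the Lipschitz Szeg\H{o} theorem for $T_N$ itself. All of that the paper avoids.

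The real problem is the signed case. Your moment method correctly establishes that the compactly supported signed measures $\tilde\mu_N$ converge weak-$*$ to $\tilde\sigma$, but the test function $h=g/x$ has a jump at $0$, and for \emph{signed} measures weak-$*$ convergence does not pass through a discontinuity of the test function even when the limit measure gives no mass to it. Your proposed repair does not close this: Lemma~\ref{lma.4} gives convergence of $\Tr p(K_N)$ only for polynomials $p$, and a ``polynomial supported in $(0,R]$ after a cutoff'' is not a polynomial, so Lemma~\ref{lma.4} says nothing about it (a genuine polynomial vanishing on $[-R,0]$ is identically zero). Nor does the bound $\norm{K_N(\varphi)}_{\bS_1}=O(\log N)$ control the near-zero contribution: it gives $(\log N)^{-1}\sum_{\abs{\lambda_j}<\delta}\abs{g(\lambda_j)}\leq L(\log N)^{-1}\norm{K_N}_{\bS_1}=O(1)$, with no gain as $\delta\to0$. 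What would actually close the gap is the sharper estimate $\limsup_N(\log N)^{-1}\norm{K_N(\varphi)}_{\bS_1}\leq\frac1{2\pi}\norm{\varphi}_{L^1}$ (from $\norm{K_N}_{\bS_1}\leq\norm{A_N(\varphi^{1/2})}_{\mathrm{HS}}\norm{A_N(\abs{\varphi}^{1/2})}_{\mathrm{HS}}$ and a direct computation of the Hilbert--Schmidt norms), which together with the weak-$*$ lower bound forces $\abs{\tilde\mu_N}\to\abs{\tilde\sigma}$ weak-$*$ and makes $\limsup_N\abs{\tilde\mu_N}([-\delta,\delta])\leq\abs{\tilde\sigma}([-\delta,\delta])\to0$. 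But this is not the argument you give, and in any event the paper's bracketing sidesteps all of it. I would recommend replacing the two-case strategy by the polynomial sandwich.
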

\begin{proof}
First let us choose $\Lambda>0$ such that $\norm{\varphi}_{L^\infty(\bbR)}\leq \Lambda$ and $\norm{K_N(\varphi)}\leq \Lambda$ for all $N$ (this can be done by Lemma~\ref{lma.3}). Next, let $R>0$ be such that $\supp\varphi\subset[-R,R]$. 

It suffices to consider the case of real-valued $g$. 
Let $\eps>0$ be given. Using the Weierstrass approximation theorem, it is not difficult to construct two polynomials $p_+$ and $p_-$ with real coefficients such that $p_+(0)=p_-(0)=0$, 
$$
p_-(\lambda)\leq g(\lambda)\leq p_+(\lambda), \quad \abs{\lambda}\leq \Lambda
$$
and
\[
0\leq p_+(\lambda)-p_-(\lambda)\leq \eps, \quad \abs{\lambda}\leq \Lambda.
\label{eq:11}
\]
Then 
$$
\Tr p_-(K_N(\varphi))\leq \Tr g(K_N(\varphi))\leq \Tr p_+(K_N(\varphi)). 
$$
By taking linear combinations of monomials in Lemma~\ref{lma.4}, we find that \eqref{eq:1} holds for all polynomials vanishing at zero, and therefore
\begin{align*}
\limsup_{N\to\infty}(\log N)^{-1}\Tr g(K_N(\varphi))
\leq 
\limsup_{N\to\infty}(\log N)^{-1}\Tr p_+(K_N(\varphi))
\\
=
\frac1{2\pi}\int_{-\infty}^\infty p_+(\varphi(t))dt
=
\frac1{2\pi}\int_{-R}^R p_+(\varphi(t))dt
\end{align*}
and similarly 
$$
\liminf_{N\to\infty}(\log N)^{-1}\Tr g(K_N(\varphi))
\geq 
\frac1{2\pi}\int_{-R}^R p_-(\varphi(t))dt.
$$
On the other hand, by \eqref{eq:11} we have
$$
\int_{-R}^R (p_+(\varphi(t))-p_-(\varphi(t)))dt\leq 2R\eps. 
$$
Since $\eps>0$ is arbitrary, we find that 
$$
\limsup_{N\to\infty}(\log N)^{-1}\Tr g(K_N(\varphi))
=
\liminf_{N\to\infty}(\log N)^{-1}\Tr g(K_N(\varphi))
=
\frac1{2\pi}\int_{-R}^R g(\varphi(t))dt,
$$
as required. 
\end{proof}

\begin{proof}[Proof of Theorem~\ref{thm2}]
Throughout the proof, we fix a Lipschitz function $g$ with $g(0)=0$ and we denote by $\norm{g}_{\text{Lip}}$ the norm of $g$ in the Lipschitz class. 
Our task is to extend \eqref{eq:1} from compactly supported bounded symbols to all real-valued symbols in $L^1(\bbR)$. For $\varphi=\overline{\varphi}\in L^1(\bbR)$
 we denote 
$$
M_N(\varphi)=(\log N)^{-1}\Tr g(K_N(\varphi)), 
\quad
M_\infty(\varphi)=\frac1{2\pi}\int_{-\infty}^\infty g(\varphi(t))dt.
$$
Further, we set 
$$
\overline{M}(\varphi)=\limsup_{N\to\infty}M_N(\varphi), \quad
\underline{M}(\varphi)=\liminf_{N\to\infty}M_N(\varphi).
$$
Now \eqref{eq:1} is equivalent to  
$$
\overline{M}(\varphi)=\underline{M}(\varphi)=M_\infty(\varphi). 
$$
Let us discuss the continuity of the (nonlinear) functionals $\overline{M}(\varphi)$, $\underline{M}(\varphi)$, $M_\infty(\varphi)$ with respect to $\varphi\in L^1(\bbR)$. 
Below $\varphi_1$, $\varphi_2$ are two real-valued symbols. By the Lipschitz continuity of $g$, we have
$$
\abs{M_\infty(\varphi_2)-M_\infty(\varphi_1)}\leq \frac1{2\pi}\norm{g}_{\text{Lip}}\norm{\varphi_2-\varphi_1}_{L^1(\bbR)}, 
$$
and so $M_\infty(\varphi)$ is Lipschitz continuous in $\varphi\in L^1(\bbR)$. 

Next, suppose $\varphi_1\leq\varphi_2$ for a.e. $t\in\bbR$; then by \eqref{eq:6} we have
$$
K_N(\varphi_1)\leq K_N(\varphi_2)
$$
in the quadratic form sense. By min-max, it follows that for all eigenvalues of $K_N(\varphi_1)$ and $K_N(\varphi_2)$ (labelled in non-decreasing order and counted with multiplicities) we have
$$
\lambda_j(K_N(\varphi_1))\leq \lambda_j(K_N(\varphi_2)), \quad j=1,\dots,N
$$
and therefore for all $j$
\begin{multline*}
\abs{g(\lambda_j(K_N(\varphi_2)))-g(\lambda_j(K_N(\varphi_1)))}
\\
\leq
\norm{g}_{\text{Lip}}\abs{\lambda_j(K_N(\varphi_2))-\lambda_j(K_N(\varphi_1))}
=
\norm{g}_{\text{Lip}}(\lambda_j(K_N(\varphi_2))-\lambda_j(K_N(\varphi_1))).
\end{multline*}
Summing over $j$, we obtain
\begin{multline*}
\abs{\Tr g(K_N(\varphi_2))-\Tr g(K_N(\varphi_1))}
\leq 
\norm{g}_{\text{Lip}}(\Tr K_N(\varphi_2)-\Tr K_N(\varphi_1))
\\
=
(\log N)\norm{g}_{\text{Lip}}\frac1{2\pi}\int_{-\infty}^\infty (\varphi_2(t)-\varphi_1(t))dt
=
(\log N)\norm{g}_{\text{Lip}}\frac1{2\pi} \norm{\varphi_2-\varphi_1}_{L^1(\bbR)}\ .
\end{multline*}
It follows that for $\varphi_1\leq\varphi_2$ we have
$$
\abs{M_N(\varphi_2)-M_N(\varphi_1)}\leq \frac1{2\pi}\norm{g}_{\text{Lip}}\norm{\varphi_2-\varphi_1}_{L^1(\bbR)}\ .
$$
Taking upper and lower limits, we finally conclude that 
\begin{align*}
\abs{\overline{M}(\varphi_2)-\overline{M}(\varphi_1)}
&\leq \frac1{2\pi}\norm{g}_{\text{Lip}}\norm{\varphi_2-\varphi_1}_{L^1(\bbR)}, 
\\
\abs{\underline{M}(\varphi_2)-\underline{M}(\varphi_1)}
&\leq \frac1{2\pi}\norm{g}_{\text{Lip}}\norm{\varphi_2-\varphi_1}_{L^1(\bbR)}, 
\end{align*}
for $\varphi_1\leq\varphi_2$. Of course, the same is true for $\varphi_1\geq\varphi_2$. 
Thus, the functionals $\overline{M}(\varphi)$ and $\underline{M}(\varphi)$ are Lipschitz continuous with respect to monotone convergence of $\varphi$ in $L^1(\bbR)$. 

Now it remains to approximate a given $\varphi\in L^1(\bbR)$ by compactly supported bounded symbols while using monotone convergence. This is an easy exercise, we leave out the details. 
\end{proof}

\section{Proof of Theorem~\ref{thm1}}\label{sec.е}
Let $\varphi$ be as in the hypothesis of the theorem and let $\psi=\varphi^{1/2}$. 
Our first task is to prove that the operators $A(\psi)$ and $B(\psi)$, formally defined by \eqref{eq:7} and \eqref{eq:8}, are well-defined and bounded.
For $B(\psi)$ this is an easy task, given by the same calculation as in the proof of Lemma~\ref{lma.2}:
\begin{align*}
\int_1^\infty\abs{(B(\psi)f)(x)}^2dx
&=
2\pi\int_1^\infty\abs{\widehat{\psi f}(-\log x)}^2\frac{dx}{x}
=
2\pi\int_0^{\infty}\abs{\widehat{\psi f}(-t)}^2dt
\\
&\leq
\norm{\psi f}_{L^2(\bbR)}^2
\leq 
\norm{\psi}_{L^\infty(\bbR)}^2\norm{f}_{L^2(\bbR)}^2,
\end{align*}
and so $B(\psi)$ is bounded if $\psi$ is bounded. 

\begin{lemma}\label{lma.6}
The operator $A(\psi):L^2(\bbR)\to\ell^2(\bbN)$ is well defined by \eqref{eq:7} and bounded. On the set of finitely supported elements $a\in\ell^2(\bbN)$, 
the adjoint is given by the formula
$$
(A(\psi)^*a)(t)=\frac1{\sqrt{2\pi}}\psi(t)\sum_{n=1}^\infty a_n n^{-\frac12-it}
$$
for a.e. $t\in\bbR$. 
The factorisation
$$
K(\varphi)=A(\psi)A(\psi)^*
$$
holds true in the sense that on any finitely supported elements $a,b\in\ell^2(\bbN)$, we have
$$
\jap{K(\varphi)a,b}_{\ell^2(\bbN)}=\jap{A(\psi)^*a,A(\psi)^*b}_{L^2(\bbR)}. 
$$
In particular, the operator $K(\varphi)$, defined initially on finitely supported elements,
 extends to a bounded positive semi-definite operator on $\ell^2(\bbN)$. 
\end{lemma}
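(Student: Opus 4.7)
The plan is to lift the finite-$N$ analysis from Section~\ref{sec.d} to the infinite setting. The only non-trivial point is $L^2\to\ell^2$ boundedness of $A(\psi)$; once that is in hand, the adjoint formula and the factorisation follow by routine Fubini manipulations on finitely supported vectors. Well-definedness coordinate-wise is immediate: since $\varphi\in L^1(\bbR)$, $\psi=\varphi^{1/2}\in L^2(\bbR)$, and Cauchy--Schwarz gives $\int\abs{\psi(t)f(t)}\,dt<\infty$ for every $f\in L^2(\bbR)$, so the integral defining $(A(\psi)f)_n$ in \eqref{eq:7} converges absolutely for each $n\in\bbN$.

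For boundedness I would bootstrap from Lemma~\ref{lma.3}. Observe that $(A(\psi)f)_n=(A_N(\psi)f)_n$ whenever $n\leq N$, and that the hypothesis \eqref{eq:2} implies \eqref{eq:4} (with $\abs{\varphi}^{1/2}=\psi$ since $\varphi\geq 0$). Lemma~\ref{lma.3} then supplies a constant $C$ with $\norm{A_N(\psi)}\leq C$ uniformly in $N$, so
$$
\sum_{n=1}^N\abs{(A(\psi)f)_n}^2=\norm{A_N(\psi)f}_{\bbC^N}^2\leq C^2\norm{f}_{L^2(\bbR)}^2,
$$
and letting $N\to\infty$ yields $A(\psi)f\in\ell^2(\bbN)$ with $\norm{A(\psi)}\leq C$.

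With $A(\psi)$ bounded, the adjoint formula for finitely supported $a\in\ell^2(\bbN)$ follows by swapping the finite sum over $n$ with the integral over $t$ in $\jap{A(\psi)f,a}_{\ell^2}$. The function $g(t):=\frac{1}{\sqrt{2\pi}}\psi(t)\sum_n a_n n^{-1/2-it}$ is a finite linear combination of the $L^2(\bbR)$-elements $\psi(t)n^{-1/2-it}$, hence lies in $L^2(\bbR)$, and we obtain $A(\psi)^*a=g$. Finally, for finitely supported $a,b$ I would compute
$$
\jap{A(\psi)^*a,A(\psi)^*b}_{L^2}=\frac{1}{2\pi}\sum_{n,m}a_n\overline{b_m}\int_{-\infty}^\infty\varphi(t)\,n^{-\frac12-it}m^{-\frac12+it}\,dt=\sum_{n,m}k(n,m)a_n\overline{b_m},
$$
which equals $\jap{K(\varphi)a,b}_{\ell^2}$ by the integral representation of $k(n,m)$ recorded in \eqref{eq:6}. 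Since $A(\psi)A(\psi)^*$ is bounded and positive semi-definite and coincides with $K(\varphi)$ on the dense set of finitely supported vectors, the bounded extension of $K(\varphi)$ equals $A(\psi)A(\psi)^*$.

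The main obstacle is the uniform bound $\norm{A_N(\psi)}=O(1)$, but this has already been surmounted in Lemma~\ref{lma.3} via the trace-norm convergence of $D_N(\varphi)$ from Lemma~\ref{lma.1}; everything else in the present lemma reduces to finite-sum bookkeeping. One minor care-point is to keep all sums in the adjoint and factorisation identities finite, so as to sidestep the delicate issue of $L^2$-convergence of general Dirichlet series $\sum a_n n^{-1/2-it}$ for $a\in\ell^2(\bbN)$.
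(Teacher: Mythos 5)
Your proposal is correct and follows essentially the same route as the paper: the key step in both is to invoke Lemma~\ref{lma.3} for the uniform bound $\norm{A_N(\psi)}=O(1)$ and let $N\to\infty$, with the adjoint formula and factorisation then reduced to a direct computation on finitely supported vectors. You simply spell out the coordinate-wise well-definedness (via Cauchy--Schwarz) and the Fubini bookkeeping in a bit more detail than the paper does.
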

\begin{proof}
The sequence $(A(\psi)f)_n$, $n\in\bbN$, is clearly well defined. By Lemma~\ref{lma.3}, there exists a constant $C>0$ independent of $N$, such that for all $f\in L^2(\bbR)$, 
$$
\sum_{n=1}^N \abs{(A(\psi)f)_n}^2\leq C\norm{f}^2.
$$
It follows that $A(\psi)f\in\ell^2(\bbN)$ and the operator $A(\psi)$ is bounded. 
Computing the adjoint and checking the factorisation of $K(\varphi)$ is a direct calculation. 
\end{proof}

The following lemma is the only point in the paper where we use some results from the theory of Hardy spaces of Dirichlet series. 
\begin{lemma}\label{lma.8}
The kernel of $K(\varphi)$ is trivial. 
\end{lemma}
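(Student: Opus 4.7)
The factorisation $K(\varphi)=A(\psi)A(\psi)^*$ with $\psi=\varphi^{1/2}$, established in Lemma~\ref{lma.6}, gives $\Ker K(\varphi)=\Ker A(\psi)^*$, so it suffices to show that $A(\psi)^*$ is injective. Suppose $a\in\Ker A(\psi)^*\subset\ell^2(\bbN)$ and let $a^{(N)}$ be the truncation to the first $N$ coordinates. By boundedness of $A(\psi)^*$ and Lemma~\ref{lma.6}, $A(\psi)^*a^{(N)}\to A(\psi)^*a=0$ in $L^2(\bbR)$, and on finitely supported sequences the explicit formula applies. Thus
$$
\frac{\psi(t)}{\sqrt{2\pi}}F_N(t)\to 0\quad\text{in }L^2(\bbR), \qquad F_N(t):=\sum_{n=1}^N a_n n^{-\tfrac12-it}.
$$

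Passing to a subsequence, this convergence also holds pointwise a.e. The next step is to invoke the theorem of Hedenmalm--Saksman (with the alternative proof of Konyagin--Queff\'elec) on almost everywhere convergence on the critical line of Dirichlet series in $\mathcal{H}^2$: for every $a\in\ell^2(\bbN)$, the partial sums $F_N(t)$ converge for a.e.\ $t\in\bbR$ to a function $F_a$. Combining with the preceding paragraph, $\psi(t)F_a(t)=0$ for a.e.\ $t$. Since $\varphi\ge0$ and $\varphi\not\equiv0$ by hypothesis, the set $E=\{t\in\bbR:\psi(t)>0\}$ has positive Lebesgue measure, so $F_a$ vanishes a.e.\ on $E$.

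The final step, and the main obstacle, is a uniqueness result in the Hardy space of Dirichlet series: if the boundary function $F_a$ of a series with $a\in\ell^2$ vanishes on a subset of the critical line of positive measure, then $a=0$. This is the sole point in the argument where genuine input from $\mathcal{H}^2$-theory is needed. Two natural routes to establish it are (i) to pass via the Bohr correspondence to $H^2(\bbT^\infty)$ and exploit uniqueness properties of boundary values there, or (ii) to regularise by looking at $F(\tfrac12+\sigma+it)=\sum a_n n^{-1/2-\sigma-it}$ for $\sigma>0$ (where the series converges absolutely), controlling the passage $\sigma\downarrow 0$ so as to reduce to a classical uniqueness statement for analytic functions in the half-plane $\Re s>\tfrac12$. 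Once this uniqueness is in hand, $a=0$ follows immediately, completing the proof that $K(\varphi)$ has trivial kernel.
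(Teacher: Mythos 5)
Your overall architecture matches the paper's: reduce to injectivity of $A(\psi)^*$ via the factorisation $K(\varphi)=A(\psi)A(\psi)^*$, pass from the truncations to conclude that the Dirichlet series with coefficients $a$ is annihilated by $\psi$ on the critical line, and then invoke a uniqueness theorem for the Hardy space of Dirichlet series. The two places where you diverge are worth noting. First, to pass from $\psi F_N\to0$ in $L^2(\bbR)$ to $\psi F_a=0$ a.e., you invoke the Hedenmalm--Saksman / Konyagin--Queff\'elec theorem on almost-everywhere convergence of $\sum a_n n^{-1/2-it}$ (a deep Carleson-type result). The paper avoids this: it cites \cite[Theorem~4.11]{HLS}, the local embedding of $\mathscr{H}^2$ into the Hardy space of the half-plane $\Re s>1/2$, which yields the weaker but sufficient statement that $f_N(\tfrac12+i\cdot)\to f(\tfrac12+i\cdot)$ in $L^2_{\mathrm{loc}}$; combining $L^2_{\mathrm{loc}}$ convergence of $f_N$ with $L^2$ convergence of $\psi f_N$ to zero already gives $\psi(t)f(\tfrac12+it)=0$ a.e. Your route is valid, but it uses heavier machinery than necessary.

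Second, and more importantly, you leave the final uniqueness step -- ``$F_a$ vanishes on a set of positive measure $\Rightarrow a=0$'' -- as an acknowledged obstacle rather than a completed argument, so your proof as written has a genuine gap. Of the two routes you sketch, route (ii) is the right one, and in fact the \emph{same} citation \cite[Theorem~4.11]{HLS} that the paper uses delivers it: once the Dirichlet series is known to lie locally in $H^2$ of the half-plane, the classical uniqueness theorem for Hardy-class functions (log-integrability of boundary values) shows that vanishing on a set of positive measure on the critical line forces $f\equiv0$. So the single embedding result provides both the $L^2_{\mathrm{loc}}$ convergence that replaces your Hedenmalm--Saksman step and the uniqueness statement you flagged as missing; citing it would both simplify and complete your argument. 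Route (i) through $H^2(\bbT^\infty)$ is not obviously easier, since the boundary behaviour there is with respect to Haar measure on $\bbT^\infty$ and extracting the a.e.\ statement on the critical line requires a nontrivial transfer.
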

\begin{proof}
First let $a\in\ell^2(\bbN)$ and let $f=f(s)$ be the corresponding Dirichlet series
$$
f(s)=\sum_{n=1}^\infty a_n n^{-s}, \quad \Re s>1/2.
$$
The space of all such functions $f$ is known as the Hardy space of Dirichlet series $\mathscr{H}^2$. 
It is known (see e.g. \cite[Theorem 4.11]{HLS}) that $\mathscr{H}^2$ is embedded into the locally uniform Hardy space in the half-plane $\Re s>1/2$, i.e. 
$$
\sup_{\tau\in\bbR}\sup_{\sigma>1/2}\int_{\tau}^{\tau+1}\abs{f(\sigma+it)}^2dt\leq C\norm{a}_{\ell^2(\bbN)}^2.
$$
In particular,  the boundary values $f(\frac12+it):=\lim\limits_{\eps\to0_+}f(\frac12+\eps+it)$ exist and are non-zero for a.e. $t\in\bbR$. Moreover, if 
$$
f_N(s)=\sum_{n=1}^N a_n n^{-s}, 
$$
then for every $\tau\in\bbR$, 
\[
\lim_{N\to\infty}
\int_{\tau}^{\tau+1}\abs{f(\tfrac12+it)-f_N(\tfrac12+it)}^2dt=0.
\label{eq:12}
\]
Suppose $A^*(\psi)a=0$; let us prove that $a=0$. 
For $a^{(N)}=(a_1,\dots,a_N,0,\dots)$ we have 
$\norm{A^*(\psi)a^{(N)}}_{L^2(\bbR)}\to\norm{A^*(\psi)a}_{L^2(\bbR)}=0$ as $N\to\infty$. 
By the formula for the adjoint of $A(\psi)$ in the previous lemma, this means that 
$$
\lim_{N\to\infty}\int_{-\infty}^\infty\abs{\psi(t)}^2\Abs{\sum_{n=1}^N a_nn^{-\frac12-it}}^2dt
=\lim_{N\to\infty}\int_{-\infty}^\infty \abs{\psi(t)}^2\abs{f_N(\tfrac12+it)}^2dt
=0.
$$
Combining this with \eqref{eq:12}, we find that $\psi(t)f(\frac12+it)=0$ for a.e. $t\in\bbR$. 
Since by our assumption $\psi$ is not identically zero, we find that $f(\frac12+it)=0$ on a set of a positive measure; hence $f$ must vanish identically and so $a=0$. 
\end{proof}

\begin{lemma}\label{lma.7}
The operator 
$$
D(\varphi)=A(\psi)^*A(\psi)-B(\psi)^*B(\psi)
$$
is trace class. 
\end{lemma}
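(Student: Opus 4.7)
The plan is to realise $D(\varphi)$ as the weak-operator limit of the truncated operators $D_N(\varphi)$ from \eqref{eq:9} and then invoke Lemma \ref{lma.1} to identify this limit as trace class.

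First I would reduce to the setting of Lemma \ref{lma.1}. Since $\varphi\geq 0$ by the hypothesis of Theorem \ref{thm1}, we have $\abs{\varphi}^{1/2}=\varphi^{1/2}=\psi$, so \eqref{eq:9} becomes
\[
D_N(\varphi)=A_N(\psi)^*A_N(\psi)-B_N(\psi)^*B_N(\psi).
\]
Condition \eqref{eq:2} coincides with \eqref{eq:4}, so Lemma \ref{lma.1} applies and yields a trace-norm Cauchy sequence $D_N(\varphi)$. Denote its trace-norm (hence operator-norm) limit by $D_\infty\in\bS_1$.

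Next I would show that $D_N(\varphi)$ converges to $D(\varphi)$ in the weak operator topology on $L^2(\bbR)$, so that the two limits must coincide. For this, write $A_N(\psi)=P_N A(\psi)$ where $P_N\colon\ell^2(\bbN)\to\bbC^N$ is restriction to the first $N$ coordinates; then
\[
A_N(\psi)^*A_N(\psi)=A(\psi)^*(P_N^*P_N)A(\psi).
\]
Since $P_N^*P_N$ is the orthogonal projection onto the first $N$ coordinates and thus converges strongly to the identity, and $A(\psi)$ is bounded by Lemma \ref{lma.6}, the right-hand side converges to $A(\psi)^*A(\psi)$ in the strong operator topology. Exactly the same argument (using the restriction $R_N\colon L^2(1,\infty)\to L^2(1,N)$, so that $B_N(\psi)=R_NB(\psi)$) gives $B_N(\psi)^*B_N(\psi)\to B(\psi)^*B(\psi)$ strongly. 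Taking the difference, $D_N(\varphi)\to D(\varphi)$ strongly, in particular in WOT.

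Operator-norm convergence of $D_N(\varphi)$ to $D_\infty$ also implies WOT convergence, so uniqueness of WOT limits forces $D(\varphi)=D_\infty\in\bS_1$, proving the lemma. The only genuinely analytic content---the trace-norm estimate $\norm{D_N(\varphi)}_{\bS_1}=O(1)$ obtained by splitting the Riemann-sum error $\tfrac{1}{n}\langle\cdot,u_n\rangle v_n-\int_{n-1}^n\tfrac{1}{x}\langle\cdot,u_x\rangle v_x\,dx$ into three rank-one pieces and using \eqref{eq:4}---has already been carried out in Lemma \ref{lma.1}. The present lemma is essentially a matter of passing to the limit, and the main (rather mild) obstacle is simply verifying that the truncations $P_N,R_N$ commute appropriately with the factorisations so that no additional trace-norm bound is needed beyond the one already established.
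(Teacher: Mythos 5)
Your proof is correct and follows essentially the same route as the paper: identify $D(\varphi)$ as the weak-operator limit of the $D_N(\varphi)$, use Lemma~\ref{lma.1} for trace-norm convergence, and conclude by uniqueness of the weak limit. The only cosmetic difference is that you argue via the projections $P_N, R_N$ to get strong convergence of the products $A_N^*A_N\to A^*A$ and $B_N^*B_N\to B^*B$, whereas the paper notes more briefly that strong convergence $A_N\to A$, $B_N\to B$ already yields weak convergence of $D_N\to D$; both are valid and rest on the boundedness from Lemma~\ref{lma.6}.
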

\begin{proof}
Let us assume that $\bbC^N$ is embedded in $\ell^2(\bbN)$ in a natural way; then the operator $A_N(\psi)$ can be regarded as the operator from $L^2(\bbR)$ to $\ell^2(\bbN)$. In the same way, $B_N(\psi)$ can be regarded as an operator from $L^2(\bbR)$ to $L^2(1,\infty)$. 
From the boundedness of $A(\psi)$ and $B(\psi)$ it is clear that we have the convergence $A_N(\psi)\to A(\psi)$ and $B_N(\psi)\to B(\psi)$ in the strong operator topology as $N\to\infty$. 
It follows that $D_N(\varphi)\to D(\varphi)$ in the weak operator topology; here $D_N(\varphi)$ is defined in \eqref{eq:9}. From Lemma~\ref{lma.1} we know that $D_N(\varphi)$ converges to a trace class operator; therefore, by the uniqueness of the weak limit, $D(\varphi)$ is trace class.  
\end{proof}

\begin{proof}[Proof of Theorem~\ref{thm1}]
We first note that by Weyl's theorem about the invariance of essential spectrum under compact perturbations, the essential spectra of $A(\psi)^*A(\psi)$ and $B(\psi)^*B(\psi)$ coincide. Similarly, by the Kato-Rosenblum theorem on trace class perturbations, the a.c. parts of $A(\psi)^*A(\psi)$ and $B(\psi)^*B(\psi)$ are unitarily equivalent. 
Next, we recall that we have already proved that 
$$
K(\varphi)=A(\psi)A(\psi)^*, \quad
T(\varphi)=B(\psi)B(\psi)^*.
$$
It is well known that for any bounded operator $X$ in a Hilbert space, the operators 
$$
X^*X|_{(\Ker X)^\perp} \text{ and } XX^*|_{(\Ker X^*)^\perp}
$$
are unitarily equivalent.
We conclude that the essential spectra of $K(\varphi)|_{(\Ker K(\varphi))^\perp}$ and $T(\varphi)|_{(\Ker T(\varphi))^\perp}$ coincide. Since the kernels of both $T(\varphi)$ and $K(\varphi)$ are trivial, we find that the essential spectra of $K(\varphi)$ and $T(\varphi)$ coincide. 
Similarly, the a.c. parts of $K(\varphi)$ and $T(\varphi)$ are unitarily equivalent; since $T(\varphi)$ is purely a.c., we find that the a.c. part of $K(\varphi)$ is unitarily equivalent to $T(\varphi)$. 
\end{proof}

\section*{Acknowledgements}
The author is grateful to Uzy Smilansky for asking the question about the spectral density in Example~\ref{exa.2} and to Ole Brevig,  Nikolai Nikolski, Eugene Shargorodsky and Alexander Sobolev for useful discussions.


\begin{thebibliography}{HLP}


\bibitem{BS}
A.~B\"ottcher, B.~Silbermann,
\emph{Introduction to large truncated Toeplitz matrices.} 
Universitext. Springer-Verlag, New York, 1999. 

\bibitem{Brevig}
O.~F.~Brevig, 
\emph{Sharp norm estimates for composition operators and Hilbert-type inequalities,} 
Bull. Lond. Math. Soc. \textbf{49} (2017), no. 6, 965-978.

\bibitem{BPP}
O.~F.~Brevig, K.~M.~Perfekt, A.~Pushnitski,
\emph{The spectrum of some Hardy kernel matrices,}
arXiv:2003.11346

\bibitem{DG}
C.~Davis, P.~Ghatage, 
\emph{On the spectrum of the Bergman-Hilbert matrix II,}
Canad. Math. Bull. \textbf{33} (1990), 60-64. 

\bibitem{Fedele}
E.~Fedele,
\emph{The spectral density of Hankel operators with piecewise continuous symbols,}
Integral Equations Operator Theory \textbf{92} (2020), no. 1, Paper No. 1, 33 pp.

\bibitem{G}
P.~Ghatage, 
\emph{On the spectrum of the Bergman-Hilbert matrix,}
Linear Algebra and Its Applications, \textbf{97} (1987), 57-63. 

\bibitem{GS}
U.~Grenander, G.~Szeg\H{o}, 
\emph{Toeplitz forms and their applications.} 
Second edition. Chelsea Publishing Co., New York, 1984.


\bibitem{HLP}
G.~H.~Hardy, J.~E.~Littlewood, G.~P\'{o}lya,
\emph{Inequalities,}
2d ed. Cambridge University Press, 1952. 

\bibitem{HLS}
H.~Hedenmalm,  P.~Lindqvist, K.~Seip,
\emph{A Hilbert space of Dirichlet series and systems of dilated functions in $L^2(0,1)$,}
Duke Math. J. \textbf{86} (1997), no.~1, 1-37.

\bibitem{I}
R.~S.~Ismagilov, 
\emph{On the spectrum of Toeplitz matrices} (Russian), Dokl. Akad. Nauk SSSR
\textbf{149}(1963), 769-772; translated in Soviet Mathematics, \textbf{4} (1963), 462-465.

\bibitem{KS}
T.~Kalvoda,  P.~\v{S}\v{t}ov\'{\i}\v{c}ek,
\emph{A family of explicitly diagonalizable weighted Hankel matrices generalizing the Hilbert matrix.} Linear Multilinear Algebra \textbf{64} (2016), no. 5, 870--884.

\bibitem{R1}
M.~Rosenblum,
\emph{On the Hilbert matrix. I and II,}
Proc. Amer. Math. Soc. \textbf{9} (1958), 137-140 and 581-585.

\bibitem{R2}
M.~Rosenblum,
\emph{The absolute continuity of Toeplitz's matrices,}
Pacific J. Math. \textbf{10} (1960), 987-996.

\bibitem{R3}
M.~Rosenblum, 
\emph{A concrete spectral theory for self-adjoint Toeplitz operators,} 
Amer. J. Math. \textbf{87}(1965), 709-718.

\bibitem{SY1}
A.~Sobolev, D.~Yafaev, 
\emph{Multichannel scattering theory for Toeplitz operators with piecewise continuous symbols,}
arXiv:1906.09415 

\bibitem{SY2}
A.~Sobolev, D.~Yafaev, 
\emph{On spectral analysis of self-adjoint Toeplitz operators,}
J. Operator Theory \textbf{84} (2020), no. 2, 453-485

\bibitem{Widom}
H.~Widom,
\emph{Hankel matrices,}
Trans. Amer. Math. Soc. \textbf{121} (1966), 1-35.

\end{thebibliography}
\end{document}